\numberwithin{equation}{section}
	\theoremstyle{definition}
	\newtheorem{definition}{Definition}[section]
    \newtheorem{example}{Example}[section]
    \newtheorem{remark}{Remark}[section]
	\theoremstyle{theorem}
    \newtheorem{corollary}{Corollary}[section]
	\newtheorem{theorem}{Theorem}[section]
    \newtheorem{lemma}{Lemma}[section]
	\theoremstyle{remark}
\newcommand{\ds}{\displaystyle}
\newcommand\numberthis{\addtocounter{equation}{1}\tag{\theequation}}
\newcommand{\R}{\ensuremath{\mathbb{R}}}
\newcommand{\C}{\ensuremath{\mathbb{C}}}
\newcommand{\Z}{\ensuremath{\mathcal{Z}}}
\newcommand{\OO}[2]{\ensuremath{\left(#1,#2\right)}}
\NewDocumentCommand{\integ}{m m O{} D<>{}}{
\IfNoValueTF{#3}{\ensuremath{\displaystyle \int_{#2} #4 \, \mathrm{d}#1}}
{\ensuremath{\displaystyle \int_{#2}^{#3} #4 \, \mathrm{d}#1}}
}
\NewDocumentCommand{\doubintax}{m m O{} D<>{}}{
\IfNoValueTF{#3}{\ensuremath{\displaystyle \integ{c}{0}[\infty]<\xi(c)\left[\int_{#2} #4 \, \mathrm{d}#1\right]>}}
{\ensuremath{\displaystyle \integ{c}{0}[\infty]<\xi(c)\left[\int_{#2}^{#3} #4 \, \mathrm{d}#1\right]>}}
}
\NewDocumentCommand{\doubintfb}{m m O{} D<>{}}{
\IfNoValueTF{#3}{\ensuremath{\displaystyle \integ{\tau}{0}[\infty]<\eta(\tau)\left[\int_{#2} #4 \, \mathrm{d}#1\right]>}}
{\ensuremath{\displaystyle \integ{\tau}{0}[\infty]<\eta(\tau)\left[\int_{#2}^{#3} #4 \, \mathrm{d}#1\right]>}}
}
\NewDocumentCommand{\normLinf}{m o}
{
\IfNoValueTF{#2}
{\ensuremath{\left\lVert #1 \right\rVert_{L^\infty (\R)}}}{\ensuremath{\left\lVert (#1,#2) \right\rVert_{L^\infty (\R)}}}
}
\newcommand{\e}[1]{\ensuremath{\exp{\left(#1\right)}}}
\let\RealFrac\frac
\NewDocumentCommand\f{mg}{%
  \IfNoValueTF{#2}
    {\RealFrac{1}{#1}}
    {\RealFrac{#1}{#2}}%
}
\newcommand{\K}[1]{\ensuremath{K\left(#1\right)}}
\title{
{Traveling Wave Solutions to a Neural Field Model With Oscillatory Synaptic Coupling Types}\\
	}
	\author{Alan Dyson\footnote{Department of Mathematics, Lehigh University, 14 East Packer Avenue, Bethlehem, PA 18015 (acd313@lehigh.edu)}}
	\date{\today}
\begin{document}
\maketitle
\abstract 
In this paper, we investigate the existence, uniqueness, and spectral stability of traveling waves arising from a single threshold neural field model with one spatial dimension, a Heaviside firing rate function, axonal propagation delay, and biologically motivated oscillatory coupling types. Neuronal tracing studies show that long-ranged excitatory connections form stripe-like patterns throughout the mammalian cortex; thus, we aim to generalize the notions of pure excitation, lateral inhibition, and lateral excitation by allowing coupling types to spatially oscillate between excitation and inhibition. In turn, we hope to analyze traveling fronts and pulses with novel shapes. With fronts as our main focus, we exploit Heaviside firing rate functions in order to establish existence and utilize speed index functions with at most one critical point as a tool for showing uniqueness of wave speed. We are able to construct Evans functions, the so-called stability index functions, in order to provide positive spectral stability results. Finally, we show that by incorporating slow linear feedback, we can compute fast pulses numerically with phase space dynamics that are similar to their corresponding singular homoclinical orbits; hence, our work answers open problems and provides insight into new ones.\\ \\
{\bf Key words. }integral differential equations, traveling wave solutions, existence, stability, Evans function.\\ \\
{\bf AMS subject classifications.} 35B25, 92C20

\section{Introduction} 

Modern research of the mammalian brain has significantly gravitated towards predicting, observing, and analyzing traveling waves.
The combination of using sophisticated electrode recording technology and pharmacologically blocking inhibition allows researchers to observe these patterns experimentally \cite{Golomb2011,traub1993analysis}. Pathology and general physiological phenomenon are often strong motivators for such research; for example, traveling waves have been observed during epileptiform \cite{Connors-Generationofepileptiform,traub1993analysis,Golomb2011}, migraines \cite{Lance-CurrentConceptsofmigraine}, and visual stimuli \cite{Lee2005,Sato2012,Benucci2007}. Naturally, computational and theoretical mathematical modeling arises in order to predict or explain propagations. 

We gain insight as to why mathematical models have been proposed for decades by realizing that the mammalian nervous system is extraordinarily complex. For some perspective, comprehensive studies show that in a human neocortex, there are approximately 20 billion neurons and $.15 \times 10^{15}$ synapses \cite{Bio:Agingandthehumanneocortex}. Even a single synaptic event, spanning from an action potential to neurotransmission, is highly nontrivial to model. Notably, the famous Hodgkin Huxley experiments \cite{hodgkin1952quantitative} provided a basis for understanding the connection between action potentials and voltage-gated sodium and potassium channels. More models have been implemented in order to track the impact that neurotransmitters have on conductance after reaching their postsynaptic receptors. Changes caused by excitatory and inhibitory neurotransmitters lead to fast and slow dynamics based on receptor type \cite{bressloff2014waves}. By adding firing rate patterns such as oscillations and bursting \cite{Izhikevich:2006} into the mix, we see that single neuron dynamics constitute a deep segment of neuroscience in their own right.

The difficulty level grows exponentially when we embed single cell dynamics into networks. After accounting for metabolic processes like spike frequency adaptation and synaptic depression at the network level, we may convince ourselves that macroscopic modeling has its place for capturing and predicting novel wave-like behaviors. While we acknowledge that there are different valid approaches to the problem, we choose to treat firing times of single neurons as uncorrelated \cite{TimeStructure} and consider a neural field model, treating space and time as continuous. Fittingly, we model firing rates as functions at the network level that only depend on the average voltage in spatial patches. Since average voltage is implicitly related to single neuron dynamics, we find that single neuron features are relevant, but significantly less emphasized.


Instead, our emphasis is on how patches of neurons connect based on spatial positioning. A key component of our model is invoking a homogeneous synaptic coupling weight kernel $K(x-y)$ to describe the spatial contribution to membrane potential that presynaptic neurons at position $y$ contribute to postsynaptic neurons at position $x$. Here the sign of $K(x-y)$ determines whether the connection is inhibitory or excitatory and the magnitude determines the connection strength.

Coupling patterns in the neocortex described by oscillations of excitation and inhibition are known to exist and therefore, merit investigation. In particular, superficial layers such as layers 2 and 3 are where excitatory pyramidal cells make extensive arborizations laterally with inhibitory interneurons in the gaps \cite{Douglas2004}. Depending on the mammal and the brain region, tracing studies reveal excitatory stripes of varying patterns and purposes. For example, tracer injections in the visual cortex of cats reveal intercolumn connections are likely based on functional specificity \cite{Columnar}; intracortical connections are correlated with visual experiences \cite{Lowel1992} and context-dependent processing \cite{Martin2017}. Similar horizontal connections have also been found in the primary visual cortexes of tree shrews \cite{Chisum2003} and macaque monkeys \cite{Lund1993}. Experiments have also been carried on on the macaque monkey prefrontal cortex where biotinylated dextran amine was injected in the layer 3 prefrontal cortex of macaque monkeys and revealed long-range stripe-like connections \cite{SynapticTargets,Topographyofpyr,Lund1993}.

Motivated by such biologically observed connection types, we seek traveling wave solutions arising from well-studied neural field models. Others have invoked similar coupling types in related work on traveling and standing waves arising from models like ours; see \cite{Layer3,PDEmethods,Multiplebumps-Laing,LijunZhangetal,LijZhangSolo,LvWang,LaingTroy,Botelho2008}.

\subsection{Model Equations and Background}
\label{subsec: Model Equations}
In this paper, fronts are our main focus. Thus, we study the following homogeneous neural field model with axonal transmission delay \cite{PintoandErmentrout-SpatiallyStructuredActivityinSynapticallyCoupledI.TravelingWaves,ExistenceandUniqueness-ErmMcLeod}:

\begin{equation} \label{eq:paper_main}
u_t + u = \alpha \integ{y}{\R}[]<K(x-y)H(u\left(y,t-\f{c_0}|x-y|\right)-\theta)>,
\end{equation}
where $u = u(x,t)$ is the mean electric potential in the spatial patch at position $x$ and time $t$. The aforementioned kernel function $K$ represents the coupling strength and type (excitatory or inhibitory) based on spatial positioning. The constant parameter $\alpha > 0$ controls synaptic coupling strength, while $\theta > 0$ is the single threshold of excitation for the network. A delay in transmission arises from the parameter $c_0$, which is the speed of action potentials in the network. The Heaviside step function $H$ represents the firing rate of a single neuron. In this simplified model, we can understand our firing rate function as a binary mechanism: if $u(x,t)$ is above a threshold $\theta$, the neurons in spatial patch $x$ will fire at a maximum rate; otherwise, they will not fire at all. Such an assumption makes sense in biology and is used to simplify the analysis.

In order to understand the biophysical basis of \eqref{eq:paper_main}, we briefly review the derivation in \cite{ExistenceandUniqueness-ErmMcLeod}. First suppose we have a fixed spatial patch of postsynaptic neurons at position $x$ and $N$ arbitrarily spaced presynaptic patches $y_i$ for $i = 1,..,N$. Moreover, suppose we observe activity at times $s_j$ for $j =1,...,M$. For each $i$ and $j$, activity from patch $y_i$ at time $s_j$ contributes
$$
\alpha \eta(t-s_j)K(x-y_i)S(u\left(y_i,s_j-\f{c_0}|x-y_i|\right)-\theta)\Delta y_i \Delta s_j
$$
to the potential of neurons at patch $x$. Here $\eta(t-s)$ is the time dependent contribution that activity at time $s$ has at time $t$, $S$ is the firing rate for the network where axonal velocity delay is properly accounted for. Summing over all $i$ and $j$, the total input to neurons at position $x$ is
\begin{equation}
\label{eq:sum_potential}
\alpha \sum_{j=1}^M \sum_{i=1}^N \eta(t-s_j)K(x-y_i)S(u\left(y_i,s_j-\f{c_0}|x-y_i|\right)-\theta)\Delta y_i \Delta s_j. 
\end{equation}
If we allow $M$ and $N$ to go to infinity and suppose \eqref{eq:sum_potential} is the only input the neurons at $x$ receive at time $t$, $u(x,t)$ takes on the form
\begin{equation}
\label{eq: original_main}
u(x,t) = \alpha \int_{-\infty}^t \eta(t-s)\int_\R K(x-y)S(u\left(y,s-\f{c_0}|x-y|\right)-\theta) \, \mathrm{d}y\mathrm{d}s.
\end{equation}
Finally, for the special case where $\eta (t) = \e{-t}H(t)$ and $S$ is the Heaviside step function, differentiation of \eqref{eq: original_main} leads to \eqref{eq:paper_main}. Note that other choices for $\eta$ may be implemented based on neurotransmitter dynamics.

Research concerning numerous wave form solutions of \eqref{eq:paper_main} has expanded largely because of some important initial results. Wilson and Cowan \cite{WilsonCowan1972} were the first to study space and time coarse graining of local populations of interacting excitatory and inhibitory neurons. Amari \cite{Amari1977} was the first to biologically justify simplifying the Wilson-Cowan model to one that combines excitatory and inhibitory neurons; by doing so, he implemented Heaviside firing rates to obtain closed form bump (respectively, traveling front) solutions when Mexican hat (respectively, pure excitation) kernels were used. In \cite{ExistenceandUniqueness-ErmMcLeod}, Ermentrout and McLeod applied a homotopy argument to prove the existence of traveling fronts in the presence of sigmoidal firing rate functions and nonnegative kernels. Pinto and Ermentrout \cite{PintoandErmentrout-SpatiallyStructuredActivityinSynapticallyCoupledI.TravelingWaves} were the first to add slow linear feedback and derive a singular perturbation problem to obtain traveling pulses. 

Following the seminal, foundational work above, there have been hundreds of related studies. Some of the primary interests are comparing wave speeds in models to experiments, bifurcations, threshold noise, single and multiple standing and traveling pulses, spectral theory, operator theory methods, heterogeneities, multiple thresholds, and synaptic depression. For comprehensive background on the models, see \cite{CoombesOwenLord-WavesandBumps,Ermentrout-NeuralNetworks,FoundationsofMathNeurosicnece-ErmentroutTerman,bressloff2014waves} and the sources within.

\subsection{Main Goal and Improvement From Previous Results}
\label{subsec: Main_goal}
Our main result, accomplished in \cref{sec: analysis_Existence,sec: spectral_stability}, are the following two theorems concerning existence, uniqueness, and stability when $K$ is in one of the kernel classes formulated in \cref{subsec: ker_classes}.

\begin{theorem}[Existence and Uniqueness of Front]
\label{thm: E_and_U}

Suppose that $0<2\theta<\alpha$ and  $K$ is in class $\mathcal{A}_{j,k}$, $\mathcal{B}_{j,k}$, or $\mathcal{C}_{j,k}$ for some integers $j$ and $k$. Then there exists a unique traveling wave front solution $u(x,t) = U(z)$ to \eqref{eq:paper_main} such that $U(0) = \theta$, $U'(0) > 0$, $U(z) < \theta$ on $(-\infty,0)$, and $U(z) > \theta$ on $(0,\infty)$. The front satisfies the reduced equation
\begin{equation} \label{eq:main_reduced}
\mu_0 U' + U = \alpha \integ{y}{\R}[]<K(z-y)H\left(U\left(y-\f{\mu_0}{c_0}|z-y|\right)-\theta\right)>
\end{equation}
with exponentially decaying limits
\begin{align*}
\lim_{z \to -\infty} U(z) = 0, \qquad \lim_{z \to \infty} U(z) = \alpha, \qquad \lim_{z \to \pm\infty} U'(z) = 0.
\end{align*}
The wave travels under the traveling coordinate $z = x + \mu_0 t$ at the unique wave speed $\mu_0 \in (0,c_0)$.
\end{theorem}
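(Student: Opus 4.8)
The plan is to run the standard Heaviside reduction for neural field fronts, with the extra work concentrated on carrying the oscillatory kernel through the computation and on exploiting the ``at most one critical point'' structure of the speed index function. Substituting $u(x,t)=U(z)$, $z=x+\mu t$, into \eqref{eq:paper_main} and changing variables $w=x-y$ yields \eqref{eq:main_reduced} with $\mu$ in place of $\mu_0$, the delay term becoming $U\bigl(z-w-\tfrac{\mu}{c_0}|w|\bigr)$. Under the normalization $U(0)=\theta$ together with $U<\theta$ on $(-\infty,0)$ and $U>\theta$ on $(0,\infty)$, for every $\mu\in(0,c_0)$ the inner map $y\mapsto y-\tfrac{\mu}{c_0}|z-y|$ is a strictly increasing continuous bijection of $\R$, so $H(U(\cdot)-\theta)$ in the integrand is the indicator of a half-line $\{\,y>y_\ast(z)\,\}$; I would compute $y_\ast(z)$ explicitly and find that $z-y_\ast(z)=\psi(z)$ is continuous, strictly increasing, and piecewise linear, with $\psi(0)=0$ and slopes $\tfrac{c_0}{c_0+\mu}$ on $z\ge 0$ and $\tfrac{c_0}{c_0-\mu}$ on $z\le 0$. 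The right-hand side then collapses to $\alpha\,\mathcal K(\psi(z))$ with $\mathcal K(s)=\int_{-\infty}^{s}K$, and the first-order linear equation $\mu U'+U=\alpha\,\mathcal K(\psi(z))$, solved with integrating factor $e^{z/\mu}$ and with the unique bounded (equivalently, $U(-\infty)=0$) selection, gives
\begin{equation*}
U(z)=\frac{\alpha}{\mu}\int_0^{\infty}e^{-r/\mu}\,\mathcal K\bigl(\psi(z-r)\bigr)\,dr .
\end{equation*}
Since the kernel classes of \cref{subsec: ker_classes} decay exponentially, this formula immediately yields $U(-\infty)=0$, $U(+\infty)=\alpha$ and $U'(\pm\infty)=0$ with exponential rates.

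Next I would impose the remaining normalization $U(0)=\theta$. Rescaling the integration variable reduces this to the scalar equation $\alpha\,G\bigl(a(\mu)\bigr)=\theta$, where $a(\mu)=\tfrac{c_0\mu}{c_0-\mu}$ is an increasing bijection of $(0,c_0)$ onto $(0,\infty)$ and $G(a)=\int_0^{\infty}e^{-t}\,\mathcal K(-at)\,dt$; this $G$ (equivalently $\Phi(\mu)=\alpha\,G(a(\mu))$) is the speed index function. Evaluating the endpoints, $G(0^+)=\mathcal K(0)=\tfrac12$ by the normalization/symmetry built into the kernel classes and $G(\infty)=0$ by dominated convergence. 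Since $0<2\theta<\alpha$ gives $\theta/\alpha\in(0,\tfrac12)$, continuity of $G$ already furnishes a root $a_0$, hence a candidate wave speed $\mu_0\in(0,c_0)$.

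For uniqueness of $\mu_0$ I would differentiate under the integral sign, obtaining $G'(a)=-a^{-2}\int_0^{\infty}u\,e^{-u/a}K(-u)\,du$, so that the critical points of $G$ are the zeros of a Laplace-type transform of $u\mapsto uK(-u)$. The role of the classes $\mathcal A_{j,k},\mathcal B_{j,k},\mathcal C_{j,k}$ here is to guarantee that this transform, although assembled from an oscillatory $K$, changes sign at most once in $a$, so $G$ has at most one critical point on $(0,\infty)$. A continuous function with at most one critical point, endpoint values $\tfrac12$ and $0$, and a single sign change relative to the level $\theta/\alpha\in(0,\tfrac12)$ must attain that level exactly once in each of the three possible shapes (monotone, or one interior max, or one interior min), which pins down $\mu_0$. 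Since for the fixed speed $\mu_0$ the bounded solution of the linear ODE above is unique and the translation normalization $U(0)=\theta$ removes the remaining shift symmetry, the front itself is unique.

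It then remains to verify that the $U$ constructed for $\mu=\mu_0$ actually satisfies the inequalities used to linearize the problem: $U<\theta$ on $(-\infty,0)$, $U>\theta$ on $(0,\infty)$, and $U'(0)>0$. The last is immediate from the ODE, $\mu_0U'(0)=\alpha\mathcal K(0)-\theta=\tfrac{\alpha}{2}-\theta>0$. The two sign conditions, however, are the expected main obstacle: unlike the pure-excitation case, $\mathcal K$ is not monotone, so $U$ need not be monotone (this is precisely the mechanism behind the ``novel shapes'' advertised in the introduction), and one cannot conclude the sign of $U-\theta$ from monotonicity. I would handle this class by class, using the explicit exponential--trigonometric primitives $\mathcal K$ together with monotonicity of $\psi$ to show that $z\mapsto U(z)-\theta$ keeps a fixed sign on each of $(-\infty,0)$ and $(0,\infty)$, reducing each case to a finite inequality on the indices $(j,k)$ and thereby closing the consistency loop.
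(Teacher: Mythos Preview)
Your three-step architecture (formal solution, speed index uniqueness, threshold verification) matches the paper exactly, and your reduction of $U(0)=\theta$ to a scalar equation for the speed via the Laplace-type function $G$ is equivalent to the paper's speed index $\phi$ (indeed $G(a(\mu))=\tfrac12-\phi(\mu)$). Your observation that a continuous function with endpoints $\tfrac12$ and $0$ and at most one critical point hits any level in $(0,\tfrac12)$ exactly once is clean and correct.

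There are two places where the plan is underspecified or off. First, the sentence ``the role of the classes \ldots\ is to guarantee that this transform changes sign at most once'' hides the entire content of the uniqueness step. The wave speed conditions $(A_n),(B_n),(C_n)$ are not stated as sign-change conditions on the Laplace transform of $|x|K(x)$; they are conditions on the repeated antiderivative $\Lambda^nK$. The paper integrates $\phi'(\mu)$ by parts $n$ times to obtain $\phi'(\mu)=\bigl(\tfrac{c_0-\mu}{c_0\mu}\bigr)^n\mu^{-2}\zeta_n(\mu)$ with $\zeta_n(\mu)=\int_{-\infty}^0 e^{\frac{c_0-\mu}{c_0\mu}x}\Lambda^nK(x)\,dx$, and then, for $(B_n)$ and $(C_n)$, shows that at any zero $\mu_*$ of $\zeta_n$ one has $\zeta_n'(\mu_*)<0$ (resp.\ $>0$) by splitting the integral at the single sign change of $\Lambda^nK$. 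That ``fixed sign of the derivative at every zero'' trick is what forces at most one critical point, and is the main technical contribution; your plan needs it explicitly rather than as a black box.

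Second, your Step~3 plan relies on ``explicit exponential--trigonometric primitives $\mathcal K$,'' but the kernel classes $\mathcal A_{j,k},\mathcal B_{j,k},\mathcal C_{j,k}$ do not assume any closed form for $K$; the examples are illustrative only. The correct mechanism is built into the \emph{threshold conditions} $\mathcal L_{\pm j}$ and $\mathcal R_{\pm k}$ that define the classes. The paper's argument (Lemmas~\ref{lemma: U_less_theta}--\ref{lemma: U_greater_theta}): on $(-\infty,0)$, locate the possible local maxima of $U$ via the sign changes of $\psi(z)=\int_{-\infty}^z e^{\frac{c_0-\mu_0}{c_0\mu_0}x}K(x)\,dx$, note that at any local extremum the ODE gives $U=\alpha\int_{-\infty}^{z}K$, and then use the inequalities in $\mathcal L_{\pm j}$ (namely $\tfrac{\alpha}{2}-\alpha\int_{-M_{2n}}^0K<\theta$, etc.) to bound each such value below $\theta$; symmetrically on $(0,\infty)$ with $\mathcal R_{\pm k}$. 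Your ``finite inequality on the indices $(j,k)$'' gestures at this, but the route through explicit primitives would not cover the general classes.
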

\begin{theorem}[Stability of Front]
\label{thm: stability}
Suppose that $0<2\theta<\alpha$, $K$ is in class $\mathcal{A}_{j,k}$, $\mathcal{B}_{j,k}$, or $\mathcal{C}_{j,k}$ for some integers $j$ and $k$, and $U$ is a unique solution described in \cref{thm: E_and_U}. If the Laplace transform of $K$ satisfies
\begin{equation}\label{eq: Laplace}
\integ{x}{-\infty}[0]<\e{s x}K(x)>=\f{p(s)}{q(s)},
\end{equation}
where $p$ and $q$ are polynomials of degree at most two, then $U$ is spectrally stable. Moreover, if $c_0=\infty$, then $U$ is linearly and nonlinearly stable.
\end{theorem}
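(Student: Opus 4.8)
The plan is to prove spectral stability for every $c_0\in(0,\infty]$ by constructing an explicit stability index function (Evans function), and then, in the no‑delay case $c_0=\infty$, to promote this to linear and nonlinear stability by the standard fixed‑point argument for Heaviside neural fields. First I would pass to the traveling frame $z=x+\mu_0 t$, write $u(x,t)=U(z)+p(z,t)$, and linearize \eqref{eq:paper_main} about $U$. Writing $h_z(w)=w-\tfrac{\mu_0}{c_0}|z-w|$, subtracting the front equation \eqref{eq:main_reduced} leaves
\[
p_t+\mu_0 p_z+p=\alpha\integ{w}{\R}[]<K(z-w)\,\delta\big(U(h_z(w))-\theta\big)\,p\big(h_z(w),\,t-\tfrac{|z-w|}{c_0}\big)>.
\]
Since $h_z$ is strictly increasing (slopes $1\pm\mu_0/c_0>0$, using $\mu_0<c_0$ from \cref{thm: E_and_U}) and $U=\theta$ only at $z=0$, the $\delta$-distribution is supported at the single point $w_\ast(z)$ solving $h_z(w_\ast(z))=0$ — explicitly $w_\ast(z)=\tfrac{\mu_0 z}{c_0+\mu_0}$ for $z>0$ and $w_\ast(z)=\tfrac{\mu_0 z}{c_0-\mu_0}$ for $z<0$ — so the spatial argument of $p$ collapses to $h_z(w_\ast(z))=0$ and, after localization (the Jacobian $|U'(0)(1\mp\mu_0/c_0)|$ is nonzero by the transversality $U'(0)>0$), the linearization becomes
\[
p_t+\mu_0 p_z+p=\alpha\,\nu(z)\,p\big(0,\,t-\tfrac{|z|}{c_0\pm\mu_0}\big),\qquad \nu(z)=\frac{K\!\big(\tfrac{c_0 z}{c_0\pm\mu_0}\big)}{U'(0)\,(1\pm\mu_0/c_0)}
\]
(upper/lower signs for $z\gtrless 0$). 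Since $\nu(z)\to 0$ at $\pm\infty$, this is a localized perturbation — coefficient vanishing at infinity — of the transport–decay part $p\mapsto-\mu_0 p_z-p$; it does not alter the essential spectrum, which therefore remains the line $\operatorname{Re}\lambda=-1$, so only the point spectrum in $\operatorname{Re}\lambda>-1$ needs to be controlled.

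Separating variables $p=e^{\lambda t}\psi(z)$ gives the scalar ODE $\mu_0\psi'+(\lambda+1)\psi=\alpha\,\nu(z)\,e^{-\lambda|z|/(c_0\pm\mu_0)}\,\psi(0)$, whose inhomogeneity is a fixed multiple of the number $\psi(0)$. For $\operatorname{Re}\lambda>-1$ the homogeneous solution $e^{-(\lambda+1)z/\mu_0}$ is unbounded at $-\infty$, so there is a unique bounded solution $\psi$ (indeed exponentially decaying, by the decay of $K$), produced by variation of parameters from $-\infty$; evaluating it at $z=0$ involves only $s<0$, where $\nu(s)$ equals $K\!\big(\tfrac{c_0 s}{c_0-\mu_0}\big)$ up to constants, so the resulting consistency condition on $\psi(0)$ reduces — via the substitution $x=\tfrac{c_0 s}{c_0-\mu_0}$ and the hypothesis \eqref{eq: Laplace} — to the vanishing of
\[
\mathcal{E}(\lambda):=1-\frac{\alpha}{\mu_0\,U'(0)}\cdot\frac{p(\sigma(\lambda))}{q(\sigma(\lambda))},\qquad \sigma(\lambda)=\frac{\lambda}{\mu_0}+\frac{c_0-\mu_0}{\mu_0 c_0},
\]
the prefactor collapsing because $\tfrac{c_0-\mu_0}{c_0}\cdot\tfrac{1}{1-\mu_0/c_0}=1$. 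As $\sigma$ is affine in $\lambda$ and $\deg p,\deg q\le 2$, $\mathcal{E}$ is a rational function of $\lambda$ whose numerator has degree at most two, hence at most two zeros (with multiplicity) in $\{\operatorname{Re}\lambda>-1\}$.

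Differentiating \eqref{eq:main_reduced} shows $\psi=U'$ is the $\lambda=0$ eigenfunction, so $\mathcal{E}(0)=0$ and $\mathcal{E}(\lambda)=\lambda\,\widetilde{\mathcal{E}}(\lambda)$ with $\widetilde{\mathcal{E}}$ the ratio of an affine polynomial to a quadratic. Spectral stability is then exactly the two assertions $\widetilde{\mathcal{E}}(0)\ne 0$ (simplicity of the translation mode) and $\widetilde{\mathcal{E}}(\lambda)\ne 0$ for $\operatorname{Re}\lambda\ge 0$ (no unstable eigenvalue), and this is the crux — the only place the detailed data enter. Here I would insert the explicit $p,q$ for each kernel class $\mathcal{A}_{j,k},\mathcal{B}_{j,k},\mathcal{C}_{j,k}$ together with the wave speed $\mu_0\in(0,c_0)$ from \cref{thm: E_and_U}, and use the standing inequality $0<2\theta<\alpha$ — which, through the front's boundary data $U(-\infty)=0$, $U(0)=\theta$, $U(+\infty)=\alpha$ and the single-critical-point structure of the speed index function, fixes the relevant coefficient signs — to verify that $\widetilde{\mathcal{E}}(0)\ne 0$ and that the at most one remaining root of $\mathcal{E}$ has $\operatorname{Re}\lambda<0$; equivalently, by the argument principle, that the closed curve traced by $\mathcal{E}$ over $i\R\cup\{\infty\}$ does not wind about the origin. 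I expect this Routh--Hurwitz-type sign check on a quadratic with coefficients assembled from $\alpha,\theta,\mu_0,c_0$ and the kernel parameters to be the main obstacle; everything before it is structural and model-independent.

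Finally, when $c_0=\infty$ the delay disappears, $\sigma(\lambda)=(\lambda+1)/\mu_0$, and the analysis above already shows the essential spectrum lies on $\operatorname{Re}\lambda=-1$ and that $\lambda=0$ is the only, and a simple, spectral point in $\operatorname{Re}\lambda\ge 0$. To obtain linear stability I would use the explicit rational form of $\mathcal{E}$ to get a uniform resolvent bound on some half-plane $\operatorname{Re}\lambda\ge-\kappa$, $\kappa>0$, off the one-dimensional translation eigenspace, so that the linearized semigroup decays like $e^{-\kappa t}$ there. Nonlinear stability with asymptotic phase then follows from the now-standard scheme for Heaviside neural fields: represent the perturbed solution's threshold crossing as a $C^1$ phase $a(t)$ (well defined since $U'(0)>0$), derive a closed equation for the perturbation measured off $a(t)$, and close a Gronwall estimate using the exponential semigroup decay and the Lipschitz control of the localized nonlinearity — I would carry this out by verifying that the hypotheses of the known nonlinear-stability results for this model class (see \cite{CoombesOwenLord-WavesandBumps,bressloff2014waves} and references therein) are met once spectral stability is in hand, rather than reproducing that argument in full.
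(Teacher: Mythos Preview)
Your setup---linearization in the moving frame, essential spectrum on $\operatorname{Re}\lambda=-1$, and the Evans function $\mathcal{E}$ as a rational function whose numerator has degree at most two---matches the paper. The gap is in how you propose to locate the second root. You plan to ``insert the explicit $p,q$ for each kernel class $\mathcal{A}_{j,k},\mathcal{B}_{j,k},\mathcal{C}_{j,k}$'' and run a Routh--Hurwitz sign check on the resulting quadratic. This cannot be carried out as stated: those classes are defined by qualitative conditions (the wave speed conditions $(A_n),(B_n),(C_n)$ and the threshold conditions $\mathcal{L}_{\pm j},\mathcal{R}_{\pm k}$), not by explicit formulas for $K$, so there is no finite list of $(p,q)$ to insert. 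The hypothesis \eqref{eq: Laplace} is an \emph{additional} structural assumption on a given $K$ in one of those classes; it does not hand you explicit coefficients.

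The paper sidesteps this entirely and the argument is short. Write the numerator as $\bar p(\lambda)=\big(\tfrac{1}{2}-\tfrac{\theta}{\alpha}\big)\,q\!\left(\tfrac{\lambda+1}{\mu_0}\right)-p\!\left(\tfrac{\lambda+1}{\mu_0}\right)$, a real polynomial of degree at most two with $\bar p(0)=0$. Simplicity of $\lambda=0$ is $\mathcal{E}'(0)=\mu_0\,\phi'(\mu_0)/\phi(\mu_0)\neq 0$, and $\phi'(\mu_0)>0$ is exactly what Lemma~\ref{lemma: unique_speed} establishes for all three class types---no further computation is needed. Since $\bar p$ has real coefficients, degree $\le 2$, and a simple real root at $0$, any remaining root $\lambda_*$ is automatically real. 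If $\lambda_*>0$, then $\mu_0/(\lambda_*+1)\in(0,\mu_0)\subset(0,c_0)$ would solve the compatibility equation $\phi(\mu)=\tfrac{1}{2}-\tfrac{\theta}{\alpha}$, contradicting the uniqueness of the wave speed from \cref{thm: E_and_U}. Hence $\lambda_*<0$. No Routh--Hurwitz, no winding number, no explicit $p,q$: uniqueness of $\mu_0$ does all the work. For the $c_0=\infty$ upgrade the paper simply invokes \cite{Sandstede-EvansFunctions}; your resolvent/Gronwall sketch is in the right spirit but the citation suffices.
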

The resulting solutions $U(\cdot)$ are heteroclinical orbits that connect the fixed points $U\equiv 0$ and $U\equiv \alpha$, crossing the threshold $\theta$ exactly once. While this idea has been investigated before, we improve previous results.

Firstly, we expand the class of kernels such that \eqref{eq:paper_main} has traveling wave solutions with unique wave speeds. By translation invariance, we assume without loss of generality, that $U(0) = \theta$. In turn, we handle the unique wave speed problem using a speed index function, first derived by Pinto and Ermentrout \cite{PintoandErmentrout-SpatiallyStructuredActivityinSynapticallyCoupledI.TravelingWaves} and later by others:

\begin{equation} 
\label{eq: speed_index}
\phi(\mu) := \integ{x}{-\infty}[0]<\e{\f{c_0-\mu}{c_0\mu}x}K(x)>.
\end{equation}
Our main improvement can be seen in our handling of unique roots $\mu_0 \in (0,c_0)$ of the compatibility equation
\begin{equation} \label{eq:step2}
\phi(\mu) = \f{2} - \f{\theta}{\alpha},
\end{equation}
for $0 <2\theta < \alpha$, arising from the requirement that the traveling waves cross the threshold exactly once.
Under our methods, we consider three new biologically motivated types of oscillatory kernel classes. The first class type can be understood as a result of combining the mechanics of previous results. The other two class types invoke new techniques; they are constructed with the intent of showing that $\phi$ has at most one critical point.

In \cref{sec: spectral_stability} we formulate the eigenvalue problem and examine spectral stability of our solutions for kernels of all three types. The essential spectrum is shown to be on the left half plane and for the point spectrum analysis on the right half plane, our main tool is the complex analytic Evans function \cite{Evans-I,Evans-II,Evans-III,Evans-IV}
\begin{equation} \label{eq:Evans_main}
\mathcal{E}(\lambda) := 1-\f{\phi(\f{\mu_0}{\lambda + 1})}{\phi(\mu_0)},
\end{equation}
the so-called stability index function, where roots are equivalent to eigenvalues. Although many authors have used the Evans function successfully, stability of traveling waves has not been discussed concerning many of the kernels considered in this paper. As a special case, we exploit the uniqueness of the wave speed to prove \cref{thm: stability}.

In \cref{sec: Example_K2}, we show how to apply our results to a kernel type commonly studied in the literature,
$$
K(x)=C(a)\e{-a|x|}(a\sin(|x|)+\cos(x)),
$$
by fully classifying the existence, uniqueness, and stability of front solutions to such a model as a function of $a$ and $\theta$.

We conclude with \cref{sec: pulses}, where we motivate future work by computing fast traveling pulses arising from the singularly perturbed system of integral equations \cite{PintoandErmentrout-SpatiallyStructuredActivityinSynapticallyCoupledI.TravelingWaves,Pinto2005}:
\begin{align}
u_t + u + w & = \alpha \integ{y}{\R}[]<K(x-y)H(u\left(y,t-\f{c_0}|x-y|\right)-\theta)>,  \label{eq: pulse1}\\
w_t & = \epsilon(u-\gamma w), \label{eq: pulse2}
\end{align}
for $0<\epsilon \ll 1$. Using the same example kernels used for the fronts, pulses are plotted and compared to singular solutions in phase space portraits.

\subsubsection{Mathematical Difficulties and Open Problems}\label{subsubsec: math_diff}
In the case of Heaviside firing rates and traveling front solutions to scalar models like \eqref{eq:paper_main}, we remark that existence, uniqueness, and stability results become deceptively difficult to prove when we allow $K$ to have any regions of negative output. The setup of the problem and canonical tools involved -- like speed \eqref{eq: speed_index} and stability \eqref{eq:Evans_main} index functions -- are almost exactly the same across studies. But the difference in difficulty jumps astronomically when $K\geq 0$ does not hold since standard methods like monotonicity arguments, Gr\"{o}nwall's inequality, and maximum principle no longer hold in a straightforward and meaningful way.

For example, when $K\geq 0$, it is trivial to see that $\phi$ is strictly increasing and \eqref{eq:step2} holds for some unique $\mu_0$. Now consider when $K$ crosses the $x$-axis countably many times. With the exception of lateral excitation kernels \cite{Zhang-HowDo}, all previous work imposes conditions on $K$ to replicate these mechanics. To the author's knowledge, relaxing assumptions in such a way that $\phi$ has critical points is new when $K$ is general.

In the stability analysis, we can also see illustrative and generalizable reasons why our work is highly nontrivial. When $K\geq 0$, one can show that the ubiquitous inequality
\begin{equation}\label{eq: Evans_max}
\left\lvert \integ{x}{-\infty}[0]<\e{\f{c_0(\lambda+1)-\mu}{c_0\mu}x}K(x)>\right\rvert < \integ{x}{-\infty}[0]<\e{\f{c_0-\mu}{c_0\mu}x}K(x)> 
\end{equation}
holds when Re$(\lambda) \geq 0$, $\lambda \neq 0$, showing that $|1-\mathcal{E}(\lambda)| < 1$. Maximum principle is then the main tool for proving spectral stability. In contrast, when $K$ has countably many zeros, proving inequalities like \eqref{eq: Evans_max} are not straightforward at all and they are not always necessary.  In fact, in \cite{Zhang-HowDo}, which to the author's knowledge contains the most rigorous stability analysis of fronts arising from kernels with only two zeros on the real line, a rigorous proof of \eqref{eq: Evans_max} is not provided and is still an open problem. Without imposing strict assumptions on $K$, we expect such a problem to be difficult to resolve -- especially considering one has to simultaneously prove existence and uniqueness by calculating $\mu$.

We close this section by remarking that by \cref{cor: EU_MH}, the present work completes the proof of the existence and uniqueness of fronts for all Mexican hat kernels with the only assumption being that $K$ is exponentially bounded. Therefore, the most natural place to begin proving  \eqref{eq: Evans_max} is for Mexican hat kernels, which we leave to the reader.

\section{Kernel Classes} \label{sec: kernel_constructions}
In this section, we systematically describe the oscillatory kernel classes we wish to study; examples are then provided in \cref{subsec: ker_classes}.
\subsubsection*{Basic Assumptions and Terminology} \label{Basic_assumptions}
In all cases, we assume our kernel $K$ has the following typical properties for this type of problem:

\begin{equation*}
\int_{-\infty}^0 K(x) \, \mathrm{d}x = \int_{0}^{\infty} K(x) \, \mathrm{d}x = \frac{1}{2}, \qquad |K(x)|\leq C\e{-\rho |x|} \qquad \text{for all } x \in \R.
\end{equation*}
For some background, we define commonly used kernel types that oscillate at most once on each half plane.
\begin{definition} \label{def: PE}
We say $K$ is a {\it pure excitation} kernel if $K(x)\geq 0$ for all $x$.
\end{definition}

\begin{definition} \label{def: MH}
We say $K$ is a {\it lateral inhibition} kernel, or Mexican hat kernel, if there exists unique constants $M_1 > 0$ and $M_2 > 0 $ such that $K(x) \geq 0$ on $(-M_1,M_2)$ and $K(x) \leq 0 $ on $(-\infty,-M_1) \cup (M_2,\infty)$.
\end{definition}
\begin{definition} \label{def: UPMH}
We say $K$ is a {\it lateral excitation} kernel, or upside down Mexican hat kernel, if there exists unique constants $N_1 > 0$ and $N_2 > 0 $ such that $K(x) \geq 0$ on $(-\infty,-N_1)\cup (N_2,\infty)$  and $K(x) \leq 0$ on $(-N_1,N_2)$. 
\end{definition}

We will come back to these definitions in \cref{subsec: previous_kernels} after we have defined our oscillatory kernel classes.

\subsection{Wave Speed Conditions for Uniqueness}
\label{sec: unique_assumptions}
The conditions presented in this subsection, which we shall call {\it wave speed conditions}, underlie the most substantial difference in approach between this work and others. In particular, This subsection provides the groundwork for obtaining the existence and uniqueness of wave speeds when the kernel functions oscillate any number of times on the left half plane.
The following repeated integral of $|x|K(x)$, originally proposed in \cite{HuttZhang-TravelingWave}, will be used throughout the paper: 
\begin{equation}
\begin{split}
\Lambda^0 K(x) & := |x|K(x), \\
\Lambda^n K(x) & := \int_x^0\int_{x_{n-1}}^0...\int_{x_1}^0 |x_0| K(x_0) \, \mathrm{d}x_0...\mathrm{d}x_{n-2}\mathrm{d}x_{n-1}\\
& = \int_x^0 \Lambda^{n-1} K(x_{n-1}) \, \mathrm{d}x_{n-1} \quad \text{for } x\leq0 \text{ and } n\geq 1.
\end{split}
\end{equation}
Using this definition, we define three wave speed conditions.

\begin{itemize}
\item[($A_n$)]  Suppose there exists $N>0$ such that $\Lambda^N K(x) \geq 0$ for all $x\leq 0$. Then the same condition holds for all $m\geq N$. Denoting $n$ as the smallest $N$ where such a property holds, we say $K$ satisfies wave speed condition ($A_n$).


\item[($B_n$)] Suppose there exists $N>0$ and a constant $B_N>0$ such that $\Lambda^N K(x) \geq 0 $ on $(-B_N,0)$, $\Lambda^N K(x) \leq (\not\equiv) \, 0$ on $(-\infty,-B_N)$. Then the same condition holds for all $m\geq N$. Denoting $n$ as the smallest $N$ where such a property holds, we say $K$ satisfies wave speed condition ($B_n$).


\item[($C_n$)] Suppose there exists $N>0$ and a constant $C_N>0$ such that $\Lambda^N K(x) \leq (\not\equiv)\, 0 $ on $(-C_N,0)$, $\Lambda^N K(x) \geq 0$ on $(-\infty,-C_N)$. Then the same condition holds for all $m\geq N$. Denoting $n$ as the smallest $N$ where such a property holds, we say $K$ satisfies wave speed condition ($C_n$).

\end{itemize}
We remark that pure excitation kernels satisfy ($A_1$); lateral inhibition kernels satisfy ($A_1$)  (respectively ($B_1$)) if $\integ{x}{-\infty}[0]<|x|K(x)>\geq 0$ (respectively $<0$); lateral excitation kernels satisfy ($C_1$). Roughly speaking, the smaller $n$ is, the closer $K$ resembles pure excitation, lateral inhibition, or lateral excitation respectively.

Our strategy involving these conditions breaks down as follows:
\begin{itemize}
\item ($A_n$) results in $\phi$ strictly increasing.
\item ($B_n$) leads to $\phi$ strictly increasing when $\phi \in (0,\f{2})$ with  one local maximum when $\phi>\f{2}$.
\item ($C_n$) leads to $\phi$ strictly increasing when $\phi \in (0,\f{2})$ with at most one local minimum when $\phi<0$.
\end{itemize}
\subsection{Threshold Requirements} \label{subsubsec: left_and_right}

The proceeding left and right half plane conditions, which we will call {\it threshold conditions}, ensure that regardless of how much $K$ oscillates, our traveling wave front solution satisfies $U(\cdot) < \theta$ on $(-\infty,0)$ and $U(\cdot) > \theta$ on $(0,\infty)$.

\subsubsection*{Left Half Plane Threshold Conditions}
We consider situations where $K$ transversely crosses the negative $x$-axis at most countably many times. The first proceeding condition $\mathcal{L}_{0}$ represents pure excitation on the left half plane. The next condition $\mathcal{L}_{j}$ was formulated from the original work in \cite[Assumptions $(L_2)-(L_3)$, p.~2-3]{LijunZhangetal}. We do, however, remove the requirements 
\begin{equation}
\label{eq: weight_change}
\integ{x}{-M_{2n}}[-M_{2n-2}]<|x|K(x)>\geq 0, \qquad \integ{x}{-M_{2n}}[0]<|x|K(x)>\geq 0
\end{equation}
for $1\leq n < \infty$, since these estimates are special cases of wave speed condition ($A_1$). 

Based on the methods in \cite{LijunZhangetal}, the requirements in \eqref{eq: weight_change} appear to be the main barrier that prevents their kernel classes from including kernels that oscillate infinitely many times. By modifying these requirements, we overcome this obstacle. We see such improvement in the creation of condition $\mathcal{L}_{\infty}$ below. Furthermore, we present the conditions $\mathcal{L}_{-j}$ and $\mathcal{L}_{-\infty}$, which for $j \neq 1$ are new and arise since we may use wave speed condition ($C_n$).

\begin{itemize}
\item[$\mathcal{L}_{0}$:] Suppose $K(x) \geq 0$ on $(-\infty,0)$. Then $K$ satisfies condition $\mathcal{L}_{0}$.
\item[$\mathcal{L}_{j}$:] Suppose $K$ transversely crosses the negative $x$-axis exactly $j$ times in the sense that there exists constants $0<M_1<M_2<...<M_j$
such that $K(-M_n) = 0$ and $sgn(K'(-M_n)) = (-1)^{n+1}$ for $n=1,2,...,j$. Also, if $j\geq 2$, suppose
\begin{align*}
\f{\alpha}{2} - \alpha\integ{x}{-M_{2n}}[0]<K(x)> < \theta \qquad \text{for } n = 1,2,...,\left\lfloor \f{j}{2} \right\rfloor.
\end{align*}
Then $K$ satisfies condition $\mathcal{L}_{j}$. If $K$ transversely crosses the negative $x$-axis infinitely many times, we allow $j\to \infty$.

\item[$\mathcal{L}_{-j}$:] Suppose $K$ transversely crosses the negative $x$-axis exactly $j$ times in the sense that there exists constants $0<M_1<...<M_{j}$
such that $K(-M_n) = 0$ and $sgn(K'(-M_n)) = (-1)^{n}$ for ${n=1,2,...,j}$. Also, if $j\geq 3$, suppose
\begin{align*}
\f{\alpha}{2} - \alpha\integ{x}{-M_{2n+1}}[0]<K(x)> < \theta \qquad \text{for } n = 1,2,...,\left\lfloor \f{j-1}{2} \right\rfloor.
\end{align*}
Then $K$ satisfies condition $\mathcal{L}_{-j}$. If $K$ transversely crosses the negative $x$-axis infinitely many times, we allow $j \to \infty$.
\end{itemize}
\begin{remark}
The main physical difference between $\mathcal{L}_{j}$ and $\mathcal{L}_{-j}$ can be understood in the following manner: For a fixed postsynaptic patch of neurons at position $x$, the local presynaptic neurons at position $y$ satisfying $x<y<x+M_1$ will be excitatory if $K(x-y)$ satisfies 
$\mathcal{L}_{j}$ and inhibitory if $K(x-y)$ satisfies $\mathcal{L}_{-j}$. A similar physical interpretation can be drawn from $\mathcal{R}_{k}$ and $\mathcal{R}_{-k}$ below, but with the position of local presynaptic neurons relative to $x$ reversed.
\end{remark}
\subsubsection*{Right Half Plane Threshold Conditions}
Conditions $\mathcal{R}_{k}$ and $\mathcal{R}_{-k}$ below come from \cite[Assumption $(R_2)-(R_6)$, p.~3]{LijunZhangetal}.
\begin{itemize}
\item[$\mathcal{R}_{0}$] Suppose $K(x) \geq 0$ on $(0,\infty)$. Then $K$ satisfies condition $\mathcal{R}_{0}$.

\item[$\mathcal{R}_{k}$:] Suppose $K$ transversely crosses the positive $x$-axis exactly $k$ times in the sense that there exists constants $0<N_1<...<N_{k}$
such that $K(N_n) = 0$ and $sgn(K'(N_n)) = (-1)^{n}$ for $n=1,2,...,k$. Also, if $k\geq 2$, suppose
\begin{align*}
\f{\alpha}{2} + \alpha\integ{x}{0}[N_{2n}]<K(x)> > \theta \qquad \text{for } n = 1,...,\left\lfloor \f{k}{2} \right\rfloor.
\end{align*}
Then $K$ satisfies condition $\mathcal{R}_{k}$. If $K$ transversely crosses the positive $x$-axis infinitely many times, we allow $k \to \infty$.

\item[$\mathcal{R}_{-k}$:] Suppose $K$ transversely crosses the positive $x$-axis exactly $k$ times in the sense that there exists constants $0<N_1<...<N_{k}$
such that $K(N_n) = 0$ and $sgn(K'(N_n)) = (-1)^{n+1}$ for $n=1,...,k$. Also, suppose
\begin{align*}
\f{\alpha}{2} + \alpha\integ{x}{0}[N_{2n-1}]<K(x)> > \theta \qquad \text{for } n = 1,...,\left\lfloor \f{k+1}{2} \right\rfloor.
\end{align*}
Then $K$ satisfies condition $\mathcal{R}_{-k}$. If $K$ transversely crosses the positive $x$-axis infinitely many times, we allow $k \to \infty$.
\end{itemize}
\begin{remark}\label{remark: sym}
If $K$ is symmetric, as it often is assumed to be for these types of problems, and satisfies $\mathcal{L}_j$ for $j\geq1$, then $K$ also satisfies $\mathcal{R}_k$ for $k = j$.
\end{remark}

\subsection{Three Families of Kernel Classes} \label{subsec: ker_classes}

We define the following disjoint types of kernel classes used throughout this paper:

\begin{itemize}
\item[$\mathcal{A}_{j,k}$] Suppose on the left half plane, $K$ satisfies threshold condition $\mathcal{L}_{j}$ for some $j\geq0$ and wave speed condition ($A_n$) for some $n> 0$; on the right half plane $K$ satisfies threshold condition $\mathcal{R}_{k}$ for some $k\geq 0$. Then we say $K$ is in class $\mathcal{A}_{j,k}$.

\item[$\mathcal{B}_{j,k}$] Suppose on the left half plane, $K$ satisfies threshold condition $\mathcal{L}_{j}$ for some $j\geq 1$ and wave speed condition ($B_n$) for some $n> 0$; on the right half plane $K$ satisfies threshold condition $\mathcal{R}_{k}$ for some $k\geq 0$. Then we say $K$ is in class $\mathcal{B}_{j,k}$.

\item[$\mathcal{C}_{j,k}$] Suppose on the left half plane, $K$ satisfies threshold condition $\mathcal{L}_{-j}$ for some $j\geq 1$ and wave speed condition ($C_n$) for some $n> 0$; on the right half plane $K$ satisfies threshold condition $\mathcal{R}_{-k}$ for some $k\geq 0$. Then we say $K$ is in class $\mathcal{C}_{j,k}$.
\end{itemize}
\subsubsection*{Examples}
Before we proceed with proving our results, we consider three examples of kernels that oscillate countably many times. These kernels will be used throughout the paper to supplement our understanding of how the kernel classes connect to existence, uniqueness, and stability of wave front solutions to \eqref{eq:paper_main}. We also use the same example kernels when studying the pulse in \cref{sec: pulses}. In all cases, the kernels are symmetric and normalized by a constant $A$ in order to integrate to one. The rest of the model parameters are assigned the values $\alpha = 1$, $\theta = 0.4$, and $c_0 = 1$.
\begin{example}
The first example is a kernel in class $\mathcal{A}_{\infty,\infty}$:
\begin{equation}
K_1(x) := A \e{-a|x|}(\cos(bx)+c),
\end{equation}
where $a=0.2$, $b=2$, $c=0.4$.
\end{example}
\begin{example} \label{ex: ex_2}
The second example is a kernel in class $\mathcal{B}_{\infty,\infty}$:
\begin{equation}
K_2(x) := A \e{-a|x|}(a\sin(|x|)+\cos(x)),
\end{equation}
where $a=0.3$. This type of kernel has already been studied explicitly in the setting of standing waves in \cite{PDEmethods,Multiplebumps-Laing,ExploitingtheHamiltonian} and traveling waves in \cite{CoombesSchmidt,ExploitingtheHamiltonian}. We will study this kernel type in more detail in \cref{sec: Example_K2} by letting $a$ and $\theta$ vary.
\end{example}
\begin{example}
The third example is a kernel in class $\mathcal{C}_{\infty,\infty}$: 
\begin{equation}
K_3(x) := A \e{-a|x|}(c-\cos(bx)),
\end{equation}
where $a = 0.2$, $b = 2$, $c = 0.4$.
\end{example}
See Figures \ref{fig:k1_threshold}, \ref{fig:k2_threshold}, and \ref{fig:k3_threshold}.
\\ \\
{\bf Not pictured:} Kernels $K_1$, $K_2$, and $K_3$ satisfy wave speed conditions $(A_2)$, $(B_2)$, and $(C_1)$ respectively.

\begin{figure}[H]
\centering
\includegraphics[width=130mm]{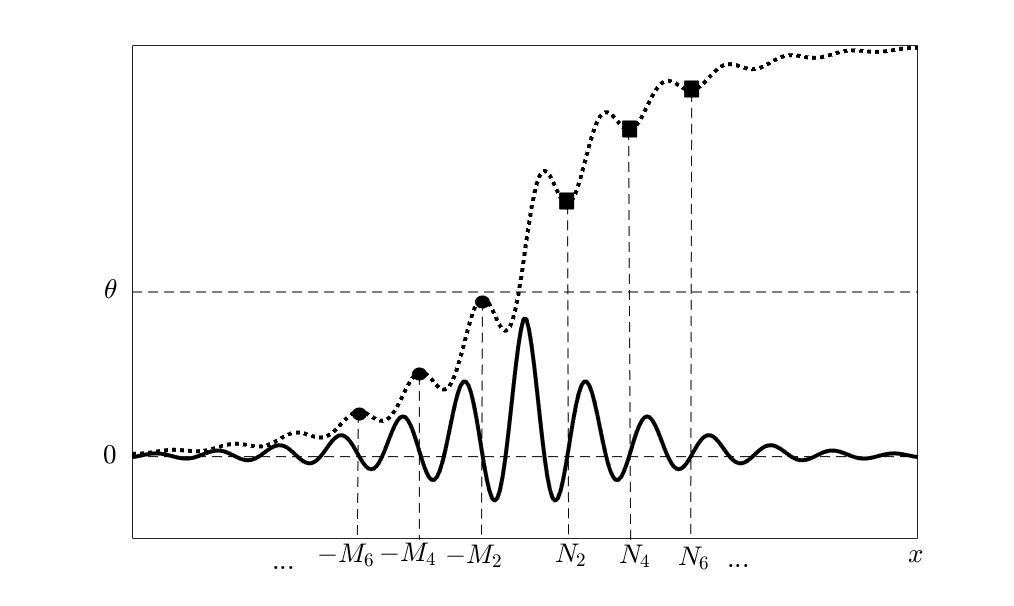}
\caption{Plot of $K_1(x)$ (solid) and $\displaystyle \f{\alpha}{2} - \alpha\integ{y}{x}[0]<K_1(y)>$ (dotted). The function $K_1$ satisfies $\mathcal{L}_\infty$ on the left half plane since the values at the dots are below the threshold and $\mathcal{R}_\infty$ on the right half plane since the values at the squares are above the threshold.}
\label{fig:k1_threshold}
\end{figure} \rule{0ex}{0ex}

\begin{figure}[H]
\centering
\includegraphics[width=130mm]{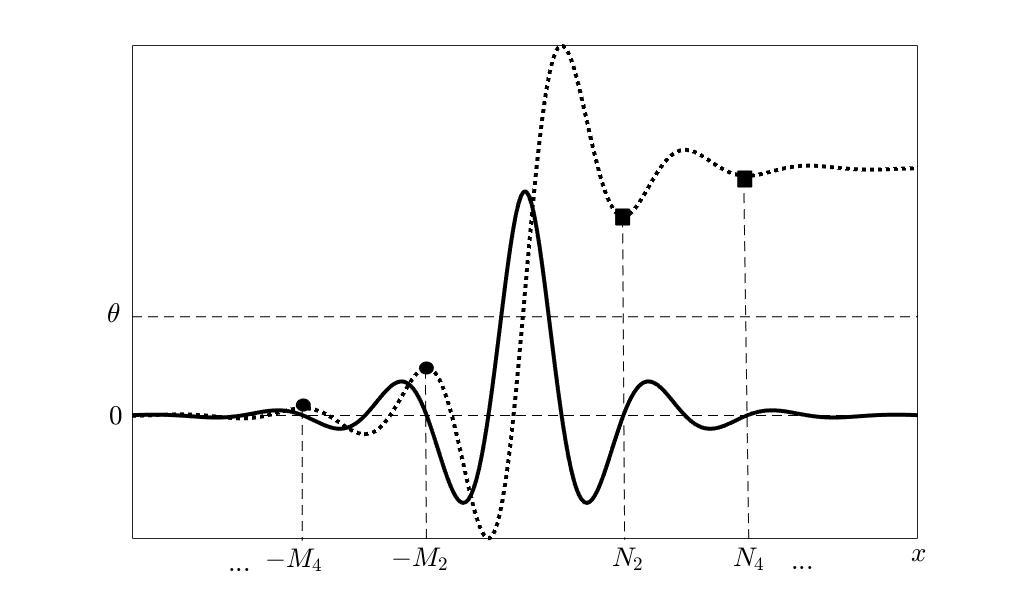}
\caption{Plot of $K_2(x)$ (solid) and $\displaystyle \f{\alpha}{2} - \alpha\integ{y}{x}[0]<K_2(y)>$ (dotted). The function $K_2$ satisfies $\mathcal{L}_\infty$ on the left half plane since the values at the dots are below the threshold and $\mathcal{R}_\infty$ on the right half plane since the values at the squares are above the threshold.}
\label{fig:k2_threshold}
\end{figure} \rule{0ex}{0ex}

\begin{figure}[H]
\centering
\includegraphics[width=130mm]{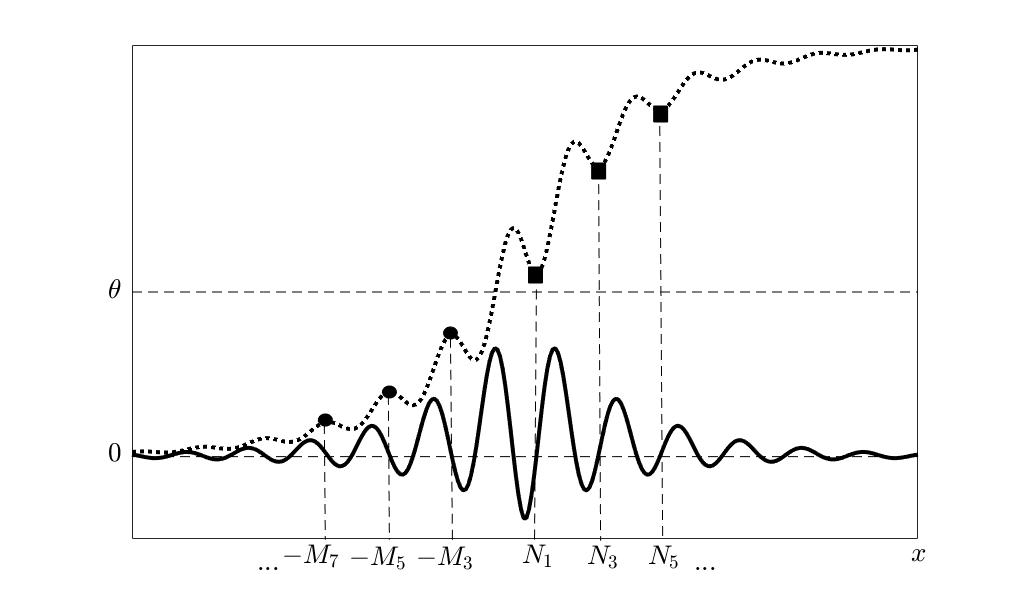}
\caption{Plot of $K_3(x)$ (solid) and $\displaystyle \f{\alpha}{2} - \alpha\integ{y}{x}[0]<K_3(y)>$ (dotted). The function $K_3$ satisfies $\mathcal{L}_{-\infty}$ on the left half plane since the values at the dots are below the threshold and $\mathcal{R}_{-\infty}$ on the right half plane since the values at the squares are above the threshold.}
\label{fig:k3_threshold}
\end{figure} 

\subsection{Previous Kernel Classes}
\label{subsec: previous_kernels}
\subsubsection*{Homogeneous Kernels}
For pure excitation kernels, existence, uniqueness, and stability of traveling wave solutions to \eqref{eq:paper_main} and similar models (seen as class $\mathcal{A}_{0,0}$) have been studied in tremendous depth; the resulting traveling wave solutions are monotonic with Evans functions that are easy to handle. 

The analysis immediately becomes more challenging when $K$ may become even a lateral inhibition or lateral excitation kernel. In such circumstances, Zhang \cite{Zhang-HowDo} first studied the existence, uniqueness, and stability of traveling wave solutions to \eqref{eq:paper_main}. For lateral inhibition kernels, he imposed the condition $\int_{-\infty}^0 |x|K(x) \, \mathrm{d}x \geq 0$ in order to guarantee uniqueness of wave speed. In this paper, such kernels can be regarded as lateral inhibition kernels that satisfy the wave speed condition $(A_1)$ and are therefore in class $\mathcal{A}_{1,1}$. The consideration that $\int_{-\infty}^0 |x|K(x) \, \mathrm{d}x < 0$  is necessarily the case where $K$ is in class $\mathcal{B}_{1,1}$, which is new. 

Other works (\cite{LijZhangSolo,magpantay2010wave})  have furthered the study by considering multiple delays such as 
\begin{equation}\label{eq: two_delays}
u_t + u = \alpha \integ{y}{\R}[]<K(x-y)H(u(y,t-\f{c_0}|x-y|)-\theta)> + \beta \integ{y}{\R}[]<J(x-y)H(u(y,t-\tau)-\theta)>,
\end{equation}
but the mechanics of the speed index function are similar to \cite{Zhang-HowDo}.

Beyond lateral inhibition and lateral excitation kernels, others have explored kernels with more oscillations. Notably, Lv and Wang \cite{LvWang} first created five oscillatory kernel classes and proved the existence and uniqueness of the corresponding front solutions to \eqref{eq:paper_main}. Although their methods are important, all of their kernels cross the $x$-axis at most four times. Using similar techniques and kernel assumptions, Zhang et al. \cite{LijunZhangetal} improved their results by increasing the maximum number of oscillations to any finite number.

However, the main limitations in \cite{LvWang,LijunZhangetal} arise from the kernel assumptions they employ to guarantee the fronts have unique wave speeds. In particular, for kernels with more than one oscillation on the left half plane, they do not allow their speed index functions to have any critical points. Rather than using the repeated integral function $\Lambda^nK (x)$ and wave speed conditions from  \cref{sec: unique_assumptions}, they impose stronger conditions. In the most nonrestrictive cases, their kernels satisfy condition $(A_n)$ in the special case where $n=1$. Moreover, conditions $(B_n)$ and $(C_n)$ are not discussed at all. From a biological perspective, this means that oscillatory kernels modeling local inhibition were neglected.

On the other end of the spectrum, in a more general model,
Zhang and Hutt \cite{HuttZhang-TravelingWave} considered the unique wave speed problem for kernels that oscillated finitely or infinitely many times, but for condition ($A_n$) only. Moreover, they only studied the threshold requirements of fronts corresponding with pure excitation, lateral inhibition, and lateral excitation kernels. 
\subsubsection*{Heterogeneities}
Real neural tissue in the cortex certainly contains heterogeneities that if significant, may disrupt propagation. Such phenomena was explored by Bressloff \cite{bressloff2001traveling} and later in other studies \cite{avitabile2015snakes, coombes2011pulsating, kilpatrick2008traveling, schmidt2009wave}. Very briefly, we will highlight the main physical reasoning.

Consider model \eqref{eq:paper_main} without delay and with nonnegative kernels of the from $K(x,y)=\overline{K}(|x-y|)(1+a h\left(\f{y}{\epsilon}\right))$, where $a |h|\leq 1$, $h$ is periodic, and $0<\epsilon \ll 1$. As $\epsilon \to 0$, we recover the homogeneous case and existence and unqueness results from \cite{ExistenceandUniqueness-ErmMcLeod} may be applied. Using a perturbation argument and the spatial averaging theory, it is shown in \cite{bressloff2001traveling} that wave propagation failure occurs if $\epsilon$ or $a$ is too large. The intuition is that heterogeneities with larger amplitude and slower frequency cause a breakdown of propagation. 

We encourage the reader to investigate such studies and the references therein, as they draw analogies to the present work concerning the role of inhibition in preventing the existence of traveling fronts.
\section{Existence and Uniqueness} \label{sec: analysis_Existence}
In this section, our goal is to rigorously prove \cref{thm: E_and_U}.

\subsection{Step 1: Formal solution} \label{subsec: step1}

Starting with the original scalar equation \eqref{eq:paper_main}, we let $z = x + \mu_0 t$ and $u(x,t) = U(z)$. Equation \eqref{eq:paper_main} immediately reduces to
\begin{equation} \label{eq_main_reduced_cov}
\mu_0 U' + U = \alpha \integ{y}{\R}[]<K(z-y)H\left(U\left(y -\f{\mu_0}{c_0}|z-y|\right)-\theta \right)>.
\end{equation}
Under the change of variable $\eta = y -\f{\mu_0}{c_0}|z-y|$, we have $z - y = \f{c_0}{c_0+sgn(z-\eta)\mu_0}(z-\eta)$ and \\ $\mathrm{d}y = \f{c_0}{c_0+sgn(z-\eta)\mu_0}\mathrm{d}\eta$. Using the assumption $U(\cdot) > \theta$ on $(0,\infty)$ and $U(\cdot) < \theta$ on $(-\infty,0)$, we arrive at the equation
\begin{equation} \label{eq:main_variationofpar}
\begin{split}
\mu_0 U' + U &= \alpha \integ{\eta}{0}[\infty]<\f{c_0}{c_0+sgn(z-\eta)\mu_0}\K{\f{c_0}{c_0+sgn(z-\eta)\mu_0}(z-\eta)}> \\
&= \alpha \integ{x}{-\infty}[\f{c_0z}{c_0+sgn(z)\mu_0}]<\K{x}>.
\end{split}
\end{equation}
Keeping the boundary conditions in mind, equation \eqref{eq:main_variationofpar} can easily be easily solved using variation of parameters. After solving and simplifying with integration by parts, we obtain the solution representation
\begin{equation} \label{eq:front_formal}
\begin{split}
U(z) &= \alpha \integ{x}{-\infty}[\f{c_0z}{c_0+sgn(z)\mu_0}]<K(x)> \\
&- \alpha \integ{x}{-\infty}[z]<\f{c_0}{c_0+sgn(x)\mu_0} \e{\f{x-z}{\mu_0}}\K{\f{c_0x}{c_0+sgn(x)\mu_0}}>,
\end{split}
\end{equation}
\begin{equation} \label{eq:frontderiv_formal}
U'(z) = \f{\alpha}{\mu_0}\integ{x}{-\infty}[z]<\f{c_0}{c_0+sgn(x)\mu_0} \e{\f{x-z}{\mu_0}}\K{\f{c_0x}{c_0+sgn(x)\mu_0}}>.
\end{equation}

\subsection{Step 2: Existence and Uniqueness of Wave Speed}
\label{subsec: wave_speed}
\subsubsection*{Existence}
Setting $U(0) = \theta$, as deemed necessary by Theorem \ref{thm: E_and_U}, we see that if $\mu_0 \in (0,c_0)$ exists and is unique, it must be the only solution to the equation
\begin{equation} \tag{\ref{eq:step2}}
\phi(\mu) = \f{2} - \f{\theta}{\alpha},
\end{equation}
where we recall from the introduction,
\begin{equation}
\tag{\ref{eq: speed_index}}
\phi(\mu) := \integ{x}{-\infty}[0]<\e{\f{c_0-\mu}{c_0\mu}x}\K{x}>
\end{equation}
is the speed index function. Since the integrand of $\phi$ is exponentially bounded, we use dominated convergence theorem and see that 
\begin{displaymath}
\lim_{\mu\to 0^{+}}\phi(\mu) = 0, \qquad \lim_{\mu \to c_0^{-}}\phi(\mu) = \integ{x}{-\infty}[0]<K(x)> = \f{2}.
\end{displaymath}
Finally, since $0<\f{2} - \f{\theta}{\alpha} <\f{2}$ by assumption, we use the intermediate value theorem to conclude there exists at least one solution to \eqref{eq:step2}. The main difficulty is utilizing the assumptions on the kernel to prove uniqueness. 

\subsubsection*{Uniqueness}
In this subsection, we prove \cref{lemma: unique_speed}. Part (i) was first proven in \cite[Subsection 2.3, parts d1,d2 p.~32-34]{HuttZhang-TravelingWave}\footnote{An inconsequential error was made that is corrected here.} for a similar model.

\begin{lemma}[Unique Wave Speed]
\label{lemma: unique_speed}
\leavevmode
\begin{itemize}
\item[(i)] Suppose $K$ satisfies wave speed condition ($A_n$) for some $n> 0$ and all other assumptions hold. Then $\phi'(\mu) > 0$ for all $\mu \in (0,c_0)$. Thus, there exists a unique solution $\mu_0 \in (0,c_0)$ to \eqref{eq:step2}.
\item[(ii)] Suppose $K$ satisfies wave speed condition ($B_n$) for some $n> 0$ and all other assumptions hold. Then $\phi$ has one critical point, a local maximum. Such a local maximum occurs when $\phi > \f{2}$ and thus, there exists a unique solution $\mu_0 \in (0,c_0)$ to \eqref{eq:step2}.
\item[(iii)] Suppose $K$ satisfies wave speed condition ($C_n$) for some $n>0$ and all other assumptions hold. Then $\phi$ has at most one critical point, a local minimum. Such a local minimum occurs when $\phi < 0$ and thus, there exists a unique solution $\mu_0 \in (0,c_0)$ to \eqref{eq:step2}.
\end{itemize}
\end{lemma}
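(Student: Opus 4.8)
The plan is to reduce everything to a single clean computation: differentiate $\phi$ under the integral sign and repeatedly integrate by parts to express the sign of $\phi'(\mu)$ in terms of the repeated integrals $\Lambda^n K$. Write $\beta(\mu) := \frac{c_0 - \mu}{c_0 \mu}$, so that $\beta$ is a smooth, strictly decreasing bijection from $(0,c_0)$ onto $(0,\infty)$, with $\beta'(\mu) = -\frac{1}{\mu^2} < 0$. Then $\phi(\mu) = \int_{-\infty}^0 e^{\beta(\mu) x} K(x)\,dx$, and since the integrand is exponentially bounded (uniformly for $\mu$ in compact subsets of $(0,c_0)$, provided $\beta(\mu) > -\rho$, which holds since $\beta > 0$) we may differentiate under the integral: $\phi'(\mu) = \beta'(\mu)\int_{-\infty}^0 x\, e^{\beta(\mu) x} K(x)\,dx = \frac{1}{\mu^2}\int_{-\infty}^0 |x|\, e^{\beta(\mu) x} K(x)\,dx = \frac{1}{\mu^2}\int_{-\infty}^0 e^{\beta(\mu) x}\,\Lambda^0 K(x)\,dx$. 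So the sign of $\phi'(\mu)$ is the sign of $\Psi(s) := \int_{-\infty}^0 e^{s x} \Lambda^0 K(x)\,dx$ evaluated at $s = \beta(\mu) > 0$.

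For part (i), the idea is to integrate by parts $n$ times in $\Psi(s)$, each time differentiating the exponential and integrating $\Lambda^{m}K$ into $-\Lambda^{m+1}K$ (using $\frac{d}{dx}\big(-\Lambda^{m+1}K(x)\big) = \Lambda^m K(x)$ for $x \le 0$, which is immediate from the definition, and checking the boundary terms vanish: at $x=0$, $\Lambda^{m+1}K(0) = 0$ by definition; at $x \to -\infty$, the exponential bound $|K| \le C e^{-\rho|x|}$ forces each $\Lambda^{m+1}K(x) e^{sx} \to 0$ since $s > 0$). After $n$ steps one gets $\Psi(s) = s^n \int_{-\infty}^0 e^{sx}\,\Lambda^n K(x)\,dx$. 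Under condition $(A_n)$ we have $\Lambda^n K(x) \ge 0$ on $(-\infty,0)$, and it is not identically zero (otherwise $\phi \equiv$ const, contradicting the limits $\phi(0^+)=0$, $\phi(c_0^-)=\frac12$), so $\Psi(s) > 0$ for all $s > 0$, hence $\phi'(\mu) > 0$ on $(0,c_0)$. Strict monotonicity plus the computed limits $\phi(0^+) = 0 < \frac12 - \frac\theta\alpha < \frac12 = \phi(c_0^-)$ and the intermediate value theorem give the unique root $\mu_0$.

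For parts (ii) and (iii) the same integration-by-parts identity $\phi'(\mu) = \frac{s^n}{\mu^2}\int_{-\infty}^0 e^{sx}\,\Lambda^n K(x)\,dx$ with $s = \beta(\mu)$ is the starting point, but now $\Lambda^n K$ changes sign once. Under $(B_n)$, $\Lambda^n K \ge 0$ on $(-B_n, 0)$ and $\le 0$ (not $\equiv 0$) on $(-\infty, -B_n)$; I would show the function $s \mapsto \int_{-\infty}^0 e^{sx}\,\Lambda^n K(x)\,dx$ has at most one zero on $(0,\infty)$ by a standard sign-change / variation-diminishing argument: split the integral at $-B_n$, factor out $e^{-s B_n}$, and observe that as $s$ increases the positive part (supported on $(-B_n,0)$, i.e. $x$ closer to $0$) is weighted relatively more heavily than the negative tail, so the ratio is monotone in $s$, forcing at most one crossing from negative to positive — equivalently, $\phi$ is first decreasing then increasing, or always increasing. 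Combined with $\phi(0^+) = 0$ and $\phi(c_0^-) = \frac12$, a critical point can only be a local minimum with negative value... wait — for $(B_n)$ the statement is a local \emph{maximum} with $\phi > \frac12$; I would instead track the sign of $\phi'$ near the endpoints: $\phi' > 0$ near $\mu = 0$ (where $s \to \infty$ and the near-zero positive part dominates) and the sign near $c_0$ (where $s \to 0^+$, $\int \Lambda^n K = \Lambda^{n+1}K(0^-)$-type quantity) determines whether there is an interior maximum; the single-sign-change property of $\Lambda^n K$ limits this to one critical point, necessarily a max, and one checks it lies in the region $\phi > \frac12 > \frac12 - \frac\theta\alpha$ so it does not interfere with uniqueness of the root. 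Part (iii) under $(C_n)$ is the mirror image: $\Lambda^n K \le 0$ near $0$ and $\ge 0$ in the tail, giving at most one critical point, a local minimum, occurring where $\phi < 0 < \frac12 - \frac\theta\alpha$, again leaving a unique root. The main obstacle I anticipate is making the "at most one sign change of $\phi'$" argument fully rigorous in parts (ii)–(iii): one needs a clean monotonicity or convexity lemma for Laplace transforms of sign-definite-then-sign-definite functions (a Descartes/total-positivity type statement), and then the careful bookkeeping to locate the unique critical value in the correct range ($\phi > \frac12$ for $(B_n)$, $\phi < 0$ for $(C_n)$) so that it cannot coincide with $\frac12 - \frac\theta\alpha \in (0,\frac12)$; the boundary-term vanishing in the repeated integration by parts is routine given the exponential bound, and the endpoint limits of $\phi$ are already established above.
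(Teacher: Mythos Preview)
Your approach for part (i) is essentially identical to the paper's: differentiate under the integral, integrate by parts $n$ times using $\frac{d}{dx}(-\Lambda^{m+1}K)=\Lambda^m K$, and read off positivity from $(A_n)$.

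For parts (ii)--(iii) your overall strategy is also the paper's, but the paper's execution is sharper and directly dissolves what you flagged as the ``main obstacle.'' Rather than invoking a variation-diminishing/total-positivity principle as a black box, the paper sets
\[
\zeta_n(\mu)=\int_{-\infty}^0 e^{\frac{c_0-\mu}{c_0\mu}x}\,\Lambda^n K(x)\,dx,
\]
notes that $\phi'$ and $\zeta_n$ share the same sign, and then argues: suppose $\zeta_n(\mu_*)=0$. Differentiating brings down a factor $|x|$, and the single sign change of $\Lambda^n K$ at $-B_n$ lets you compare $|x|$ to $B_n$ on each piece:
\[
\zeta_n'(\mu_*)=\frac{1}{\mu_*^2}\int_{-\infty}^0 |x|\,e^{\cdots}\Lambda^n K\,dx
<\frac{B_n}{\mu_*^2}\int_{-\infty}^0 e^{\cdots}\Lambda^n K\,dx=\frac{B_n}{\mu_*^2}\zeta_n(\mu_*)=0.
\]
So every zero of $\zeta_n$ is simple with $\zeta_n'<0$; hence $\zeta_n$ (and thus $\phi'$) changes sign at most once, from $+$ to $-$, and every critical point of $\phi$ is a local maximum. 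Part (iii) is the mirror image with the inequality reversed. This one-line comparison is exactly the rigorous version of your ``factor out $e^{-sB_n}$ and the ratio is monotone'' heuristic --- indeed, showing $\frac{d}{ds}\big(e^{sB_n}g(s)\big)>0$ unwinds to the same inequality after the chain rule.

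Two small points to clean up. First, your momentary confusion (``first decreasing then increasing \ldots wait'') comes from forgetting that $s=\beta(\mu)$ is decreasing, so a $-\to+$ crossing of $g(s)$ in $s$ becomes a $+\to-$ crossing of $\phi'(\mu)$ in $\mu$: a local maximum, as stated. Second, for locating the critical value in the correct range, the paper uses the endpoint behaviour of $\phi'$ near $\mu=c_0^-$ together with $\phi(c_0^-)=\tfrac12$: since $\phi' > 0$ near $\mu=0$ and the unique possible sign change is $+\to-$, if a maximum exists it must satisfy $\phi(\mu_*)>\tfrac12$, so the level $\tfrac12-\tfrac{\theta}{\alpha}\in(0,\tfrac12)$ is hit exactly once on the rising part. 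The $(C_n)$ case places any minimum below $0$ by the symmetric argument.
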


\begin{proof}
\begin{itemize}

\item[(i)] 
Differentiating $\phi$ and using integration by parts $n$ times, we may write
\begin{align*}
\phi ' (\mu) & = \f{\mu^2} \integ{x}{-\infty}[0]<|x| \e{\f{c_0-\mu}{c_0\mu}x}K(x)>\\
& = \f{\mu^2} \integ{x}{-\infty}[0]< \e{\f{c_0-\mu}{c_0\mu}x}\Lambda^0 K(x)>\\
& = \f{\mu^2} \integ{x}{-\infty}[0]< \e{\f{c_0-\mu}{c_0\mu}x}\left[-\Lambda^1 K(x)\right]'>\\
& = \f{c_0-\mu}{c_0\mu}\f{\mu^2} \integ{x}{-\infty}[0]<\e{\f{c_0-\mu}{c_0\mu}x}\Lambda^1 K(x)> \\
\vdots \\
& = \left(\f{c_0-\mu}{c_0\mu}\right)^n\f{\mu^2} \integ{x}{-\infty}[0]<\e{\f{c_0-\mu}{c_0\mu}x}\Lambda^n K(x)> \label{eq: phi_deriv}. \numberthis
\end{align*}
Since $K$ satisfies wave speed condition $(A_n)$, we have $\Lambda^n K(x) \geq 0$ for $x\leq 0$.
Therefore, $\phi'(\mu)>0$ for all $\mu \in (0,c_0)$ so there exists a unique solution $\mu_0 \in (0,c_0)$ to \eqref{eq:step2}.

\item[(ii)] For all $n$, define the function
\begin{equation}
\label{eq: zeta_n}
\zeta_n(\mu) :=  \integ{x}{-\infty}[0]<\e{\f{c_0-\mu}{c_0\mu}x}\Lambda^n K(x)>.
\end{equation}
We observe that the sign of $\phi'$ and $\zeta_n$ are equivalent since $\mu < c_0$ and by \eqref{eq: phi_deriv},
\begin{equation}
\phi ' (\mu) = \left(\f{c_0-\mu}{c_0\mu}\right)^n \f{\mu^2} \zeta_n(\mu).
\end{equation}

Since $K$ satisfies wave speed condition $(B_n)$, there exists a constant $B_n > 0$ such that $\Lambda^n K(x) \geq 0 $ on $(-B_n,0)$ and $\Lambda^n K(x) \leq 0 $ on $(-\infty,-B_n)$. Suppose $\phi ' (\mu_*) = 0$ for some $\mu_*\in (0,c_0)$. This implies $\zeta_n(\mu_*) = 0$ and 
\begin{align*}
\zeta_n'(\mu_*) & = \f{\mu_*^2}\integ{x}{-\infty}[0]<|x|\e{\f{c_0-\mu_*}{c_0\mu_*}x}\Lambda^n K(x)> \\
& = \f{\mu_*^2}\left[\integ{x}{-B_n}[0]<|x|\e{\f{c_0-\mu_*}{c_0\mu_*}x}\Lambda^n K(x)> + \integ{x}{-\infty}[-B_n]<|x|\e{\f{c_0-\mu_*}{c_0\mu_*}x}\Lambda^n K(x)>\right] \\
& < \f{B_n}{\mu_*^2}\left[\integ{x}{-B_n}[0]<\e{\f{c_0-\mu_*}{c_0\mu_*}x}\Lambda^n K(x)> + \integ{x}{-\infty}[-B_n]<\e{\f{c_0-\mu_*}{c_0\mu_*}x}\Lambda^n K(x)>\right] \\
& = \f{B_n}{\mu_*^2}\zeta_n(\mu_*)\\
& = 0.
\end{align*}
Hence, $\zeta_n(\mu)$ changes signs from positive to negative at $\mu=\mu_*$. Since $\zeta_n$ and $\phi'$ have equivalent signs, we may conclude that $\phi$ has a local maximum at $\mu=\mu_*$ by the first derivative test. Moreover, since $\mu_*$ is arbitrary and $\phi$ is differentiable on $(0,c_0)$, we conclude that all critical points of $\phi$ must be local maximums. Finally, we have 
\begin{equation}
\label{eq: phi_lims}
\lim_{\mu\to 0^{+}}\phi(\mu) = 0, \qquad \lim_{\mu \to c_0^{-}}\phi(\mu) = \f{2}, \qquad 
\end{equation}
and
\begin{equation}\label{eq: phiprime_lims}
\lim_{\mu \to c_0^-} \phi'(\mu) = \f{c_0^{2n}}\lim_{\mu \to c_0^-} (c_0-\mu)^n \integ{x}{-\infty}[0]<\e{\f{c_0-\mu}{c_0\mu}x}\Lambda^n K(x)>
\end{equation}
so if $\Lambda^n K(-\infty)\in [-\infty,0)$, the limit in \eqref{eq: phiprime_lims} will be negative and such a $\mu_*$ will exist by the intermediate value theorem with $\phi(\mu_*)>\f{2}$. Therefore, $\phi$ is strictly increasing when $0<\phi<\f{2}$ so there exists a unique solution $\mu_0\in (0,c_0)$ to \eqref{eq:step2}.

\item[(iii)]  We proceed similarly to (ii). Since $K$ satisfies wave speed condition $(C_n)$, there exists a constant $C_n > 0$ such that $\Lambda^n K(x) \leq 0 $ on $(-C_n,0)$ and $\Lambda^n K(x) \geq 0 $ on $(-\infty,-C_n)$. Suppose $\phi'(\mu_*) = 0$ for some $\mu_* \in (0,c_0)$. Then as before, $\zeta_n(\mu_*) = 0$ and 
\begin{align*}
\zeta_n'(\mu_*) & = \f{\mu_*^2}\integ{x}{-\infty}[0]<|x|\e{\f{c_0-\mu_*}{c_0\mu_*}x}\Lambda^n K(x)> \\
& = \f{\mu_*^2}\left[\integ{x}{-C_n}[0]<|x|\e{\f{c_0-\mu_*}{c_0\mu_*}x}\Lambda^n K(x)> + \integ{x}{-\infty}[-C_n]<|x|\e{\f{c_0-\mu_*}{c_0\mu_*}x}\Lambda^n K(x)>\right] \\
& > \f{C_n}{\mu_*^2}\left[\integ{x}{-C_n}[0]<\e{\f{c_0-\mu_*}{c_0\mu_*}x}\Lambda^n K(x)> + \integ{x}{-\infty}[-C_n]<\e{\f{c_0-\mu_*}{c_0\mu_*}x}\Lambda^n K(x)>\right] \\
& = \f{C_n}{\mu_*^2}\zeta_n(\mu_*)\\
& = 0.
\end{align*}
We apply the first derivative test on $\phi$ again; this time we find that $\phi$ has a local minimum at $\mu = \mu_*$ and conclude that critical points of $\phi$ must be local minimums with $\phi(\mu_*)<0$. Therefore, by similar reasoning to (ii), $\phi$ is strictly increasing when $0<\phi<\f{2}$ so there exists a unique solution $\mu_0\in (0,c_0)$ to \eqref{eq:step2}.

\end{itemize}
\end{proof}
See Figure \ref{fig:speed_index}. As a corollary to \cref{lemma: unique_speed}, we may easily answer an open problem: does a front solution to \eqref{eq:paper_main} arising from a lateral inhibition kernel with $\int_{-\infty}^0 |x|K(x)\,\mathrm{d}x<0$ have a unique wave speed?
\begin{corollary}
\label{cor: lat_unique_speed}
Suppose $K$ is any lateral inhibition kernel. Then there exists a unique solution $\mu_0 \in (0,c_0)$ to \eqref{eq:step2} independent of the value of $\int_{-\infty}^0 |x|K(x)\,\mathrm{d}x$.
\end{corollary}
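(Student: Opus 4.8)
The plan is to show that any lateral inhibition (Mexican hat) kernel falls into one of the wave speed conditions already analyzed in \cref{lemma: unique_speed}, so that the uniqueness conclusion follows immediately from that lemma regardless of the sign of $\int_{-\infty}^0 |x|K(x)\,\mathrm{d}x$. Recall from \cref{def: MH} that a lateral inhibition kernel satisfies $K(x)\geq 0$ on $(-M_1,M_2)$ and $K(x)\leq 0$ on $(-\infty,-M_1)\cup(M_2,\infty)$ for unique $M_1,M_2>0$. On the left half line this means $K(x)\geq 0$ on $(-M_1,0)$ and $K(x)\leq 0$ on $(-\infty,-M_1)$, so $\Lambda^0 K(x) = |x|K(x)$ has exactly the same sign pattern: nonnegative on $(-M_1,0)$, nonpositive on $(-\infty,-M_1)$.

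The argument then splits on the sign of $I:=\int_{-\infty}^0 |x|K(x)\,\mathrm{d}x = \int_{-\infty}^0 \Lambda^0 K(x)\,\mathrm{d}x$. First, I would observe the elementary fact (already noted in the remark following the wave speed conditions) that $\Lambda^1 K(x) = \int_x^0 \Lambda^0 K(x_0)\,\mathrm{d}x_0$ for $x\leq 0$, so $\Lambda^1 K(0)=0$ and $\Lambda^1 K(-\infty) = I$. If $I\geq 0$: since $\Lambda^0 K$ is nonnegative on $(-M_1,0)$ and nonpositive on $(-\infty,-M_1)$, the function $x\mapsto \Lambda^1 K(x)$ (reading from $x=0$ leftward, i.e. as $x$ decreases) first increases on $(-M_1,0)$ and then decreases on $(-\infty,-M_1)$, starting at $0$ and ending at $I\geq 0$; hence $\Lambda^1 K(x)\geq 0$ for all $x\leq 0$, so $K$ satisfies wave speed condition $(A_1)$ (or $(A_n)$ for some $n\le 1$), and \cref{lemma: unique_speed}(i) gives the unique $\mu_0\in(0,c_0)$. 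If $I<0$: the same monotonicity analysis shows $\Lambda^1 K$ rises from $0$ on $(-M_1,0)$, then falls, crossing zero exactly once at some $-B_1<-M_1$ and remaining negative (and $\not\equiv 0$) on $(-\infty,-B_1)$; that is precisely wave speed condition $(B_1)$, and \cref{lemma: unique_speed}(ii) again yields a unique $\mu_0\in(0,c_0)$.

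The one technical point that needs a little care — and which I expect to be the main (though still minor) obstacle — is justifying the "rises then falls, crosses zero once" claim for $\Lambda^1 K$ rigorously, including the existence and uniqueness of the crossing point $B_1$ when $I<0$ and the fact that $\Lambda^1 K \not\equiv 0$ near $-\infty$. This follows from: $\Lambda^1 K$ is continuous; its derivative with respect to the lower limit is $-\Lambda^0 K(x)= -|x|K(x)$, which is $\leq 0$ on $(-M_1,0)$ and $\geq 0$ on $(-\infty,-M_1)$ (reading in the natural orientation), so $\Lambda^1 K$ has a single interior extremum structure; combined with the boundary values $\Lambda^1 K(0)=0$ and $\lim_{x\to-\infty}\Lambda^1 K(x)=I$, a sign chase pins down exactly one zero in $(-\infty,-M_1)$ when $I<0$ and no zero when $I\ge 0$. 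The exponential bound $|K(x)|\le C e^{-\rho|x|}$ guarantees all these integrals converge and that $\Lambda^1 K$ is not eventually identically zero unless $K$ is, so the $\not\equiv 0$ requirement in $(B_1)$ holds. With the kernel placed in class satisfying $(A_1)$ or $(B_1)$ accordingly, \cref{lemma: unique_speed} closes the proof, and since the dichotomy on the sign of $I$ is exhaustive, the conclusion holds independent of that sign.
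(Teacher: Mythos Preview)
Your proposal is correct and follows essentially the same approach as the paper: split on the sign of $\int_{-\infty}^0 |x|K(x)\,\mathrm{d}x$, observe that a lateral inhibition kernel then satisfies wave speed condition $(A_1)$ or $(B_1)$ accordingly, and invoke \cref{lemma: unique_speed}(i)--(ii). The paper's proof is a two-line version of this (it simply asserts $K\in\mathcal{A}_{1,1}$ or $K\in\mathcal{B}_{1,1}$, relying on the remark after the wave speed conditions), whereas you have supplied the monotonicity argument for $\Lambda^1 K$ that justifies that remark.
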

\begin{proof}
If $\int_{-\infty}^0 |x|K(x)\,\mathrm{d}x\geq 0$, then $K$ is in class $\mathcal{A}_{1,1}$. Otherwise, $K$ is in $\mathcal{B}_{1,1}$. Applying \cref{lemma: unique_speed} (i)-(ii), the result follows.
\end{proof}

Combining the results of Step 1 with \cref{lemma: unique_speed}, we see that if our formal solutions are in fact real fronts, they will have unique wave speeds. The last step we must take is proving that the solutions satisfy the necessary threshold requirements to be actual solutions. After this step, our proof of existence and uniqueness of front solutions to \eqref{eq:paper_main} will be complete.

\begin{figure}[H]
\centering
\includegraphics[width=130mm]{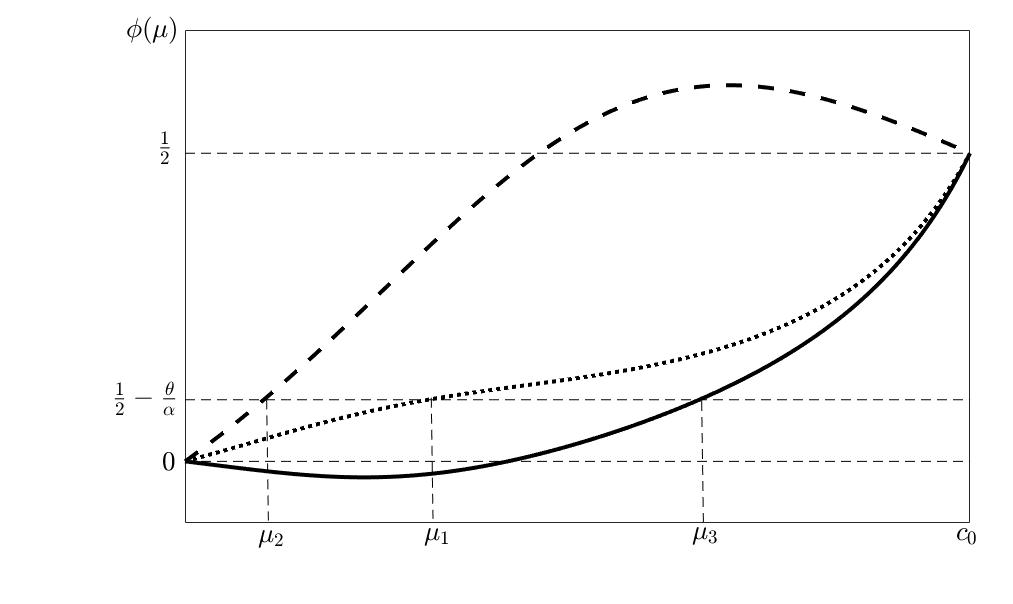}
\caption{Plots of $\phi(\mu)$ for kernels $K_1$ (dotted), $K_2$ (dashed), and $K_3$ (solid).}
\label{fig:speed_index}
\end{figure} \rule{0ex}{0ex}

\subsection{Step 3: Formal Solution is a Real Solution} \label{subsec: step3}
The main intent of this step is to use the threshold conditions outlined in \cref{subsubsec: left_and_right} to prove that $U(0) = \theta$, $U(z) < \theta$ on $(-\infty,0)$, and $U(z) > \theta$ on $(0,\infty)$. We analyze the left and right half planes separately.  

Following the approach by others, our technique is to show that on the left half plane, our conditions on $K$ are sufficient in guaranteeing that all possible local maximums of $U$ lie below the threshold. On the right half plane, we show that all possible local minimums of $U$ lie above the threshold.

We now prove the following lemmas using a modified version of \cite[Lemma 3 and 4, p.~6]{LijunZhangetal}. The modifications account for the case where $K$ oscillates infinitely many times and when $K$ satisfies $\mathcal{L}_{-j}$ for $j > 1$.
\begin{lemma} \label{lemma: U_less_theta}
If $K$ satisfies $\mathcal{L}_{j}$ or $\mathcal{L}_{-j}$ for some $j\geq 0$, then $U(z)<\theta$ on $(-\infty,0)$.
\end{lemma}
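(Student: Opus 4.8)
The plan is to work with the explicit formula for $U$ on the left half plane. When $z < 0$ we have $\mathrm{sgn}(z) = -1$, so \eqref{eq:front_formal} specializes to
\[
U(z) = \alpha \integ{x}{-\infty}[\frac{c_0 z}{c_0 - \mu_0}]<K(x)> - \alpha \integ{x}{-\infty}[z]<\frac{c_0}{c_0 + \mathrm{sgn}(x)\mu_0}\, \e{\frac{x-z}{\mu_0}}\, \K{\frac{c_0 x}{c_0 + \mathrm{sgn}(x)\mu_0}}>,
\]
and $U'(z)$ is given by \eqref{eq:frontderiv_formal}. Since $\lim_{z\to-\infty} U(z) = 0 < \theta$ and $U(0) = \theta$, it suffices to rule out any local maximum of $U$ on $(-\infty,0)$ whose value is $\geq \theta$; equivalently, I will show that at every critical point $z_* < 0$ of $U$ (a point where $U'(z_*) = 0$), we have $U(z_*) < \theta$. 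First I would substitute $U'(z_*) = 0$ into the variation-of-parameters equation \eqref{eq:main_variationofpar}, which at such a point gives the clean identity
\[
U(z_*) = \alpha \integ{x}{-\infty}[\frac{c_0 z_*}{c_0 - \mu_0}]<K(x)>.
\]
So the whole problem reduces to bounding the partial integral $\alpha\int_{-\infty}^{w} K(x)\,\mathrm{d}x$, where $w := \frac{c_0 z_*}{c_0-\mu_0} < 0$ ranges over $(-\infty,0)$, by $\theta$ — or rather, by using the structure of the critical points more carefully, bounding it only at those $w$ that can actually arise.

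The key observation is that the set of critical points is constrained by the sign structure of $K$. Writing $F(z) := \alpha\int_{-\infty}^{c_0 z/(c_0-\mu_0)} K(x)\,\mathrm{d}x = \tfrac{\alpha}{2} - \alpha\int_{z}^{0}\cdots$ (the dotted curve in Figures \ref{fig:k1_threshold}–\ref{fig:k3_threshold}, reparametrized), one has $U(z) - F(z) = -\frac{\alpha}{\mu_0}\int_{-\infty}^{z}(\cdots)$ and differentiating shows $U'(z) = \frac{1}{\mu_0}(F(z) - U(z))\cdot(\text{something})$; more directly, from \eqref{eq:main_variationofpar}, $\mu_0 U' = F - U$ up to the coordinate scaling, so $U' = 0$ forces $U = F$ there and $\mathrm{sgn}(U'') = \mathrm{sgn}(F')$ at that point. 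Now $F'$ has the same sign as $K\!\left(\frac{c_0 z}{c_0-\mu_0}\right)$. In the case $\mathcal{L}_j$ (resp. $\mathcal{L}_{-j}$), the sign pattern of $K$ on $(-\infty,0)$ is prescribed by the transversal crossings at $-M_1 < \cdots < -M_j$ with $\mathrm{sgn}(K'(-M_n)) = (-1)^{n+1}$ (resp. $(-1)^n$), so a local maximum of $U$ can only occur when $c_0 z/(c_0-\mu_0)$ lies in an interval where $K$ transitions from $+$ to $-$, i.e. near one of the crossings $-M_{2m}$ (resp. $-M_{2m+1}$). At such a candidate point, $U(z_*) \leq F$ evaluated at its local maximum over that interval, which is $\frac{\alpha}{2} - \alpha\int_{-M_{2m}}^{0} K(x)\,\mathrm{d}x$ (resp. $\frac{\alpha}{2} - \alpha\int_{-M_{2m+1}}^{0} K$) — and this is exactly the quantity that the hypotheses $\mathcal{L}_j$, $\mathcal{L}_{-j}$ require to be $< \theta$. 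For $j = 0$ (pure excitation on the left, condition $\mathcal{L}_0$), $K \geq 0$ makes $F$ nondecreasing, so $U$ has no interior local maximum exceeding its right endpoint value, and one argues directly that $U < \theta = U(0)$.

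I expect the main obstacle to be the bookkeeping for the \emph{infinitely oscillating} case $j = \infty$ and for $\mathcal{L}_{-j}$ with $j > 1$ — precisely the cases the lemma advertises as new. One must check that the finitely many inequalities in the hypothesis (indexed by $n \leq \lfloor j/2\rfloor$ or $\lfloor (j-1)/2\rfloor$) genuinely control \emph{all} possible local maxima, including the accumulation behavior as $z \to 0^-$ when there are infinitely many crossings $-M_n \to 0$; here one uses that $\int_{-M_n}^0 K \to 0$, so the constraint $\frac{\alpha}{2} - \alpha\int_{-M_n}^0 K \to \frac{\alpha}{2}$, and one needs $\frac{\alpha}{2} \leq \theta$?— no, rather one needs that near $z = 0$ the value $U(z)$ approaches $\theta$ from below, which follows from $U(0) = \theta$ together with $U'(0) > 0$ (established in Step 1 / the statement of \cref{thm: E_and_U}), so a small left-neighborhood of $0$ is automatically handled and only finitely many crossings $-M_n$ are "dangerous." The remaining care is the alternating-sign argument showing a local max of $U$ forces the argument of $K$ into a $+\!\to\!-$ transition zone, which is where the transversality hypotheses $\mathrm{sgn}(K'(-M_n)) = (-1)^{n+1}$ (resp. $(-1)^n$) do the work; I would mirror the structure of \cite[Lemma 3, p.~6]{LijunZhangetal} here, noting the modification is only in which indices $n$ appear and in allowing the index set to be infinite.
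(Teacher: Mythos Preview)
Your overall strategy matches the paper's: at any critical point $z_*<0$ the reduced ODE $\mu_0 U'+U=F$ gives $U(z_*)=F(z_*)=\alpha\int_{-\infty}^{w_*}K$ with $w_*=\tfrac{c_0 z_*}{c_0-\mu_0}$, and one then bounds this partial integral using the threshold inequalities. For $\mathcal{L}_j$ your second-derivative mechanism is adequate and essentially reproduces the argument cited from \cite{LijunZhangetal}.

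There is, however, a genuine gap in the $\mathcal{L}_{-j}$ case. Your second-derivative test at a local maximum yields only $K(w_*)\le 0$, which for $\mathcal{L}_{-j}$ places $w_*$ in one of the intervals $(-M_{2n+1},-M_{2n})$ with $n\ge 0$, \emph{including} $n=0$, i.e.\ $w_*\in(-M_1,0)$. On that first interval $K<0$, so $\int_{-M_1}^0 K<0$ and hence
\[
F(z_*)\;\le\;\frac{\alpha}{2}-\alpha\int_{-M_1}^{0}K(x)\,\mathrm{d}x\;>\;\frac{\alpha}{2}\;>\;\theta,
\]
which is useless; and the hypothesis $\mathcal{L}_{-j}$ deliberately imposes the inequality only for $n\ge 1$, not $n=0$. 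So you must separately \emph{exclude} local maxima with $w_*\in(-M_1,0)$, and the bare second-derivative test does not do this. The paper closes this gap by working instead with
\[
\psi(z)=\int_{-\infty}^{z}\exp\!\Big(\tfrac{c_0-\mu_0}{c_0\mu_0}x\Big)K(x)\,\mathrm{d}x,
\]
which is (up to a positive factor) equal to $U'$ at the rescaled point; since $\psi$ is decreasing on $(-M_1,0)$ and $\psi(0)=\tfrac12-\tfrac{\theta}{\alpha}>0$, one gets $\psi>0$ there, so $U'>0$ and no local maximum can occur in that first region. The same device then shows, for $j\ge 3$, that any $+\!\to\!-$ sign change of $\psi$ must fall in $(-M_{2n+1},-M_{2n})$ with $n\ge 1$, where your bound and the hypothesis do apply.

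A smaller point: in the $j=\infty$ discussion you have the crossings accumulating at $0$, but in fact $M_n\to\infty$ so $-M_n\to-\infty$. Consequently $\int_{-M_{2n}}^0 K\to\tfrac12$ (not $0$) and $\tfrac{\alpha}{2}-\alpha\int_{-M_{2n}}^0 K\to 0<\theta$; this is exactly how the paper reduces the infinite case to the finite one, without any appeal to $U'(0)>0$.
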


\begin{proof}
If $K$ satisfies $\mathcal{L}_j$ for some $0\leq j < \infty$, see \cite[Lemma 4, p.~6]{LijunZhangetal}. The case where $j = \infty$ is a trivial extension of the same argument: since $\alpha \int_{-\infty}^0 K(x) \, \mathrm{d}x = \f{\alpha}{2}$, there exists a positive integer $N(\alpha,\theta)$ such that $\f{\alpha}{2} - \alpha \int_{-M_{2n}}^0 K(x) \, \mathrm{d}x < \theta$ for all $n\geq N(\alpha,\theta)$. Hence, the argument for the $0\leq j < \infty$ case can be applied again.

Suppose $K$ satisfies $\mathcal{L}_{-j}$ for some $j\geq 1$. For $z\leq 0$, define 
\begin{equation}
\begin{split}
\psi(z) := & \integ{x}{-\infty}[z]<\e{\f{c_0-\mu_0}{c_0\mu_0}x}K(x)>\\
= & \left(\f{2}-\f{\theta}{\alpha}\right) - \integ{x}{z}[0]<\e{\f{c_0-\mu_0}{c_0\mu_0}x}K(x)>.
\end{split}
\end{equation}

The sign of $U'(\f{c_0-\mu_0}{c_0}z)$ is determined by $\psi(z)$ since

\begin{equation}
\begin{split}
U'\left(\f{c_0-\mu_0}{c_0}z\right)& =
\f{\alpha}{\mu_0}\e{-\f{c_0-\mu_0}{c_0\mu_0}z}\integ{x}{-\infty}[z]<\e{\f{c_0-\mu_0}{c_0\mu_0}x}K(x)>\\
& =\f{\alpha}{\mu_0}\e{-\f{c_0-\mu_0}{c_0\mu_0}z}\psi(z).
\end{split}
\end{equation}

If $j = 1$, then $\psi(z)$ is increasing on $(-\infty,-M_1)$, decreasing on $(-M_1,0)$ with $\psi(0) = \f{2}-\f{\theta}{\alpha}>0$ and $\underset{z\to -\infty}{\lim}\psi(z) = 0$. Therefore, $\psi(z)\geq 0$ on $(-\infty,0)$ so $U(z)$ is monotonic on $(-\infty,0)$.

If $j = 2$, then $\psi(z)$ is decreasing on $(-\infty,-M_2) \cup (-M_1,0)$, increasing on $(-M_2,-M_1)$. Since $\psi(0) > 0$ and $\underset{z\to -\infty}{\lim}\psi(z) = 0$, we conclude that $\psi(z)$ changes signs exactly one time, from negative to positive for some $z_*\in (-M_2,-M_1)$. But this mean $U(z)$ has a local minimum at $z = \f{c_0-\mu_0}{c_0}z_*$ and no local maximums on $(-\infty,0)$.

If $j\geq 3$, then $\psi(z)$ is increasing on $(-M_{2n+2},-M_{2n+1})$ and decreasing on $(-M_{2n+1},-M_{2n})$ for $n\geq 0$, where we let $M_0=0$. Based again on the fact that $\psi(0)>0$ and $\underset{z\to -\infty}{\lim}\psi(z) = 0$, we see that if $\psi(z)$ changes signs from positive to negative at $z = z_*$, and therefore $U(z)$ has a local maximum at $z = \f{c_0-\mu_0}{c_0}z_*$, then $z_*\in (-M_{2n+1},-M_{2n})$ for $n\geq 1$. But then
\begin{equation}
\begin{split}
U\left(\f{c_0-\mu_0}{c_0}z_*\right) & = \alpha \integ{x}{-\infty}[z_*]<K(x)> - \mu_0 U ' \left(\f{c_0-\mu_0}{c_0}z_*\right) \\
& = \alpha \integ{x}{-\infty}[z_*]<K(x)> \\
& = \f{\alpha}{2}-\alpha \integ{x}{z_*}[0]<K(x)> \\
& < \f{\alpha}{2}-\alpha \integ{x}{-M_{2n+1}}[0]<K(x)>\\
& < \theta.
\end{split}
\end{equation}
Therefore, $U$ stays below the threshold on all possible local maximums.
\end{proof}

\begin{lemma} \label{lemma: U_greater_theta}
If $K$ satisfies $\mathcal{R}_{k}$ or $\mathcal{R}_{-k}$ for some $k\geq 0$, then $U(z)>\theta$ on $(0,\infty)$.
\end{lemma}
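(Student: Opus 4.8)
\textbf{Proof proposal for Lemma \ref{lemma: U_greater_theta}.} The plan is to mirror the structure of the proof of \cref{lemma: U_less_theta}, but on the right half plane, where the roles of maxima and minima are reversed: we must show that every possible local minimum of $U$ on $(0,\infty)$ lies above $\theta$. As in Step 1, the natural object to track is the sign of $U'$. Using the formal solution representation \eqref{eq:frontderiv_formal}, I would write $U'$ on $(0,\infty)$ in terms of an auxiliary function analogous to $\psi$, say
\begin{equation*}
\chi(z) := \integ{x}{\f{c_0z}{c_0+\mu_0}}[\infty]<\e{-\f{c_0+\mu_0}{c_0\mu_0}x}K(x)>,
\end{equation*}
obtained by the same change of variables that produced \eqref{eq:main_variationofpar}; here $z>0$ forces the $sgn$ in \eqref{eq:frontderiv_formal} to be $+1$. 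The sign of $U'$ at the corresponding point is governed by $\chi$, so critical points of $U$ correspond to zeros of $\chi$, and the direction of the sign change of $\chi$ (via the first derivative test, exploiting the exponential weight exactly as in the $\zeta_n$ computation) tells us whether $U$ has a local max or a local min there.

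Next I would split into the cases $\mathcal{R}_k$ and $\mathcal{R}_{-k}$, and within each into $k=0,1,2$ and $k\geq 3$, just as the companion lemma does. The case $k=0$ (i.e.\ $K\geq 0$ on $(0,\infty)$) should give $\chi$ of one sign, hence $U$ monotone on $(0,\infty)$, and then the boundary behavior $U(0)=\theta$ together with $\lim_{z\to\infty}U(z)=\alpha>\theta$ finishes it. For small $k$ one checks directly that $\chi$ changes sign at most once and in the direction forcing a local minimum (not maximum) of $U$; combined with $U(0^+)>\theta$ locally and $U(\infty)=\alpha$, this keeps $U$ above $\theta$. For $k\geq 3$, the key is: if $U$ has a local minimum at some $z_{**}=\frac{c_0z_*}{c_0+\mu_0}$, locate $z_*$ in the appropriate interval between consecutive zeros $N_{2n-1}<N_{2n}$ (for $\mathcal{R}_k$) or $N_{2n}<N_{2n+1}$ (for $\mathcal{R}_{-k}$), then evaluate $U(z_{**})$ using the identity $U = \alpha\int_{-\infty}^{(\cdot)}K - \mu_0 U'$ together with $U'=0$ at the critical point, reducing to $U(z_{**}) = \f{\alpha}{2} + \alpha\integ{x}{0}[\,\cdot\,]<K(x)>$; the threshold inequalities built into conditions $\mathcal{R}_k$ / $\mathcal{R}_{-k}$ (the requirement $\f{\alpha}{2}+\alpha\int_0^{N_{2n}}K > \theta$, resp.\ with $N_{2n-1}$) are then exactly what is needed to conclude $U(z_{**}) > \theta$. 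The $k=\infty$ case is handled as in \cref{lemma: U_less_theta}: since $\alpha\int_0^\infty K = \f{\alpha}{2}$, there is $N(\alpha,\theta)$ beyond which the partial integrals are within $\theta$ of $\f{\alpha}{2}$, so only finitely many oscillations are relevant and the finite-$k$ argument applies.

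I expect the main obstacle to be bookkeeping rather than a deep difficulty: getting the change-of-variables bounds on $\chi$ and the parity/index shifts ($2n$ vs.\ $2n\pm1$) exactly right, so that the intervals in which a local minimum of $U$ can occur line up precisely with the intervals for which conditions $\mathcal{R}_k$, $\mathcal{R}_{-k}$ supply a threshold inequality. A secondary subtlety is that on $(0,\infty)$ the delay term $\frac{\mu_0}{c_0}|z-y|$ changes the argument of $K$ nontrivially, so one must be careful that the relevant endpoint is $\frac{c_0 z}{c_0+\mu_0}$ and not simply $z$; the sign bookkeeping there is the analogue of the left-plane computation but with $c_0+\mu_0$ in place of $c_0-\mu_0$, which flips the monotonicity behavior of the exponential weight and hence the direction of the first-derivative-test conclusion. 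Once these are pinned down, the argument is a direct transcription of the $\mathcal{L}_{\pm j}$ proof, invoking \cite[Lemma 3, p.~6]{LijunZhangetal} for the classical finite $\mathcal{R}_k$ case and supplying the $k=\infty$ and $\mathcal{R}_{-k}$, $k>1$ extensions as above.
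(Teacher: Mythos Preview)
Your overall strategy matches the paper's: the paper simply cites \cite{LijunZhangetal} for finite $k$ and notes the $k=\infty$ extension is handled exactly as in \cref{lemma: U_less_theta}. Your sketch of what that cited argument actually does (locate possible local minima of $U$ via an auxiliary sign-tracking function, then bound $U$ at those minima using the ODE $\mu_0 U'+U=\alpha\int_{-\infty}^{(\cdot)}K$ together with the threshold inequalities in $\mathcal{R}_{\pm k}$) is the right picture.

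However, your auxiliary function $\chi$ is not the correct object. For $z>0$ the integral in \eqref{eq:frontderiv_formal} runs from $-\infty$ to $z$ and therefore carries a left-half-plane contribution that does not disappear. After the change of variables you get, for $w=\tfrac{c_0 z}{c_0+\mu_0}\ge 0$,
\[
U'\Bigl(\tfrac{c_0+\mu_0}{c_0}w\Bigr)=\tfrac{\alpha}{\mu_0}\,\e{-\tfrac{c_0+\mu_0}{c_0\mu_0}w}\,\widetilde\psi(w),\qquad
\widetilde\psi(w):=\phi(\mu_0)+\integ{y}{0}[w]<\e{\tfrac{c_0+\mu_0}{c_0\mu_0}y}K(y)>,
\]
so the exponent is \emph{positive}, the integral runs from $0$ to $w$ (not $w$ to $\infty$), and the constant $\phi(\mu_0)=\tfrac12-\tfrac{\theta}{\alpha}>0$ is essential (it encodes $U'(0)>0$). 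The sign analysis is then the direct analogue of $\psi$ in \cref{lemma: U_less_theta}: $\widetilde\psi'(w)=\e{\tfrac{c_0+\mu_0}{c_0\mu_0}w}K(w)$, so monotonicity of $\widetilde\psi$ is read off from the sign of $K$ on $(0,\infty)$, not from a $\zeta_n$-style weighted estimate. With this correction the localization of minima in the intervals $(N_{2n-1},N_{2n})$ (for $\mathcal{R}_k$) or $(N_{2n},N_{2n+1})$ (for $\mathcal{R}_{-k}$) and the threshold bound $U=\tfrac{\alpha}{2}+\alpha\int_0^{(\cdot)}K>\theta$ go through exactly as you outline.
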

\begin{proof}
If $k<\infty$, see \cite{LijunZhangetal}. The extension to the $k \to \infty$ case is similar to the $j \to \infty$ case in \cref{lemma: U_less_theta}.
\end{proof}

\subsection{Summary of Existence and Uniqueness}
Combining the arguments \cref{subsec: step1,subsec: wave_speed,subsec: step3}, we have proved \cref{thm: E_and_U}.
\begin{corollary}\label{cor: EU_MH}
Suppose that $0<2\theta<\alpha$ and $K$ is any lateral inhibition kernel. Then independent of the value of $\int_{-\infty}^0 |x|K(x)\,\mathrm{d}x$, there exists a unique traveling wave front to \eqref{eq:paper_main} as described by \cref{thm: E_and_U}.
\end{corollary}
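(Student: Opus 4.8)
The plan is to deduce this as an immediate consequence of \cref{thm: E_and_U}, once one checks that \emph{every} lateral inhibition kernel falls into one of the three kernel families $\mathcal{A}_{j,k}$, $\mathcal{B}_{j,k}$, $\mathcal{C}_{j,k}$ introduced in \cref{subsec: ker_classes}. Concretely, I would (1) read off the threshold conditions satisfied by a Mexican hat kernel, (2) read off which wave speed condition it satisfies, splitting on the sign of $\int_{-\infty}^0 |x|K(x)\,\mathrm{d}x$, and (3) quote \cref{thm: E_and_U}. This is essentially the existence/uniqueness-of-front upgrade of the wave-speed statement in \cref{cor: lat_unique_speed}.

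For step (1): by \cref{def: MH} there are unique $M_1,M_2>0$ with $K\ge 0$ on $(-M_1,M_2)$ and $K\le 0$ on $(-\infty,-M_1)\cup(M_2,\infty)$. On the left half plane $K$ has a single transversal sign change, at $-M_1$, passing from negative to nonnegative, so $sgn(K'(-M_1))=+1=(-1)^{1+1}$; since here $j=1$, condition $\mathcal{L}_1$ imposes no auxiliary integral inequality, so $K$ satisfies $\mathcal{L}_1$. Symmetrically, on the right half plane $K$ has a single transversal sign change at $N_1:=M_2$, from positive to negative, so $sgn(K'(N_1))=-1=(-1)^1$, and with $k=1$ there is again no further condition, so $K$ satisfies $\mathcal{R}_1$.

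For step (2): set $g(x):=\Lambda^1 K(x)=\int_x^0 |x_0|K(x_0)\,\mathrm{d}x_0$ for $x\le 0$, so $g(0)=0$ and $g'(x)=-|x|K(x)$. Hence $g$ is nonincreasing on $(-M_1,0)$ and nondecreasing on $(-\infty,-M_1)$, so $g\ge 0$ on $[-M_1,0]$, and on $(-\infty,-M_1)$ the function $g$ decreases monotonically (as $x$ decreases) from $g(-M_1)\ge 0$ to the limit $\int_{-\infty}^0 |x|K(x)\,\mathrm{d}x$, which exists by the exponential bound on $K$. If $\int_{-\infty}^0 |x|K(x)\,\mathrm{d}x\ge 0$, then $g\ge 0$ on all of $(-\infty,0]$, i.e. $\Lambda^1 K(x)\ge 0$ for $x\le 0$, so $K$ satisfies wave speed condition $(A_1)$ and $K\in\mathcal{A}_{1,1}$. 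If instead $\int_{-\infty}^0 |x|K(x)\,\mathrm{d}x<0$, the intermediate value theorem gives a unique $B_1>M_1$ with $g(-B_1)=0$, $g\ge 0$ on $(-B_1,0)$ and $g<0\ (\not\equiv 0)$ on $(-\infty,-B_1)$, so $K$ satisfies $(B_1)$ and $K\in\mathcal{B}_{1,1}$. In either case $0<2\theta<\alpha$ and $K$ lies in $\mathcal{A}_{1,1}$ or $\mathcal{B}_{1,1}$, so \cref{thm: E_and_U} yields the claimed unique front with all stated properties.

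I do not expect a genuine obstacle: the only content beyond citing \cref{thm: E_and_U} is the monotonicity bookkeeping for $\Lambda^1 K$ in step (2), which is routine. The one point deserving a word of care is the transversality of the two sign changes, needed to match the "transversely crosses" wording of $\mathcal{L}_1$ and $\mathcal{R}_1$; this is forced by the uniqueness of $M_1$ and $M_2$ in \cref{def: MH} (a non-transversal zero would allow a second admissible choice), so it is harmless.
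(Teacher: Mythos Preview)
Your proposal is correct and follows essentially the same route as the paper, with one presentational difference worth noting. The paper's proof is terser: it cites an external reference (Zhang, \emph{How Do}) for the existence of at least one front independent of the sign of $\int_{-\infty}^0 |x|K(x)\,\mathrm{d}x$, and then invokes \cref{cor: lat_unique_speed} for uniqueness of the wave speed; that corollary already records the dichotomy $K\in\mathcal{A}_{1,1}$ versus $K\in\mathcal{B}_{1,1}$. You instead verify the class membership directly (the $\mathcal{L}_1$/$\mathcal{R}_1$ threshold checks and the $\Lambda^1 K$ monotonicity argument) and apply \cref{thm: E_and_U} for both existence and uniqueness in one stroke. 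Your version is more self-contained within the paper's own framework and avoids the external citation; the paper's version is shorter because the threshold verification for Mexican hats is classical. The underlying mathematics is the same.
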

\begin{proof}
For lateral inhibition kernels, it is well known (see \cite{Zhang-HowDo}) that the argument for existence of at least one front solution to \eqref{eq:paper_main} is independent of the value  of $\int_{-\infty}^0 |x|K(x) \, \mathrm{d}x$. By \cref{cor: lat_unique_speed}, uniqueness of wave speed is established.
\end{proof}

See Figure \ref{fig:wave_fronts} for plots of $U(z)$ with kernels $K_1$, $K_2$, and $K_3$.
\begin{figure}[H]
\centering
\includegraphics[width=130mm]{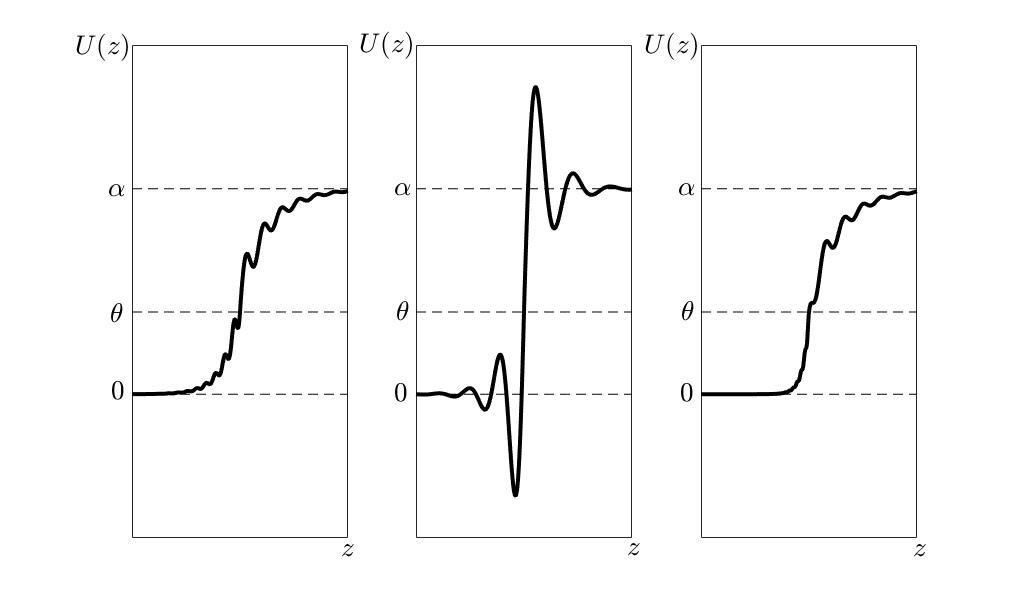}
\caption{Plots of $U(z)$ for kernels $K_1$ (left), $K_2$ (center), and $K_3$ (right).}
\label{fig:wave_fronts}
\end{figure}

\section{Spectral Stability}
\label{sec: spectral_stability}

A natural concern is whether the solutions satisfy any criteria for stability. We have shown that a physiologically motivated model permits traveling wave solutions. However, actual neuronal networks are surely more complex than our model. Hence, in order for our analysis to be biologically useful, our solutions must ideally persist under the influence of small perturbations.

Motivated by the pioneering work of Evans \cite{Evans-I,Evans-II,Evans-III,Evans-IV} on the Hodgkin-Huxley model and Jones \cite{Jones-StabilityoftheTravelingWave} on the Fitzhugh-Nagumo model, stability of traveling wave solutions has also been well developed for nonlocal equations. In particular, Zhang \cite{Zhang-OnStability} studied spectral stability of \eqref{eq: pulse1}-\eqref{eq: pulse2} in the case where $c_0 = \infty$ (no delay); he formulated a closed form Evans function and used it to solve the linear eigenvalue problem. The problem came full circle when Sandstede \cite{Sandstede-EvansFunctions} proved that in appropriate function spaces, spectral stability implies nonlinear stability. Interestingly, when there is delay, a rigorous proof of a similar connection has not been given and is still an open problem. The main difficulty is the fact that the eigenvalue problem becomes nonlinear in $\lambda$.

Some partial results have been made: Zhang \cite{Zhang-HowDo} studied the spectral stability of wave front solutions to \eqref{eq:paper_main} arising from pure excitation, lateral inhibition, and lateral excitation kernels. Coombes and Owen \cite{CoombesOwen_Evans} analyzed and discussed the spectral stability in a more general setting (such as in \eqref{eq: original_main}) using similar techniques to Zhang. We remark that stability was not considered at all in \cite{LijunZhangetal, LijZhangSolo, LvWang, magpantay2010wave}, some of the main references that motivate this work.

In an effort to view the front as a stationary solution, we change $x$ to the traveling coordinate $z$ and write \eqref{eq:paper_main} as
\begin{equation} \label{eq:stab_main}
P_t + \mu_0 P_z + P = \alpha \integ{y}{\R}<\K{z-y}H(P\left(y-\f{\mu_0}{c_0}|z-y|,t-\f{c_0}|z-y|\right)-\theta)>.
\end{equation}
Letting $p(z,t) = P(z,t)-U(z)$ and linearizing \eqref{eq:stab_main} about the wave front, we yield
\begin{equation} \label{eq: linearized_main}
p_t+\mu_0 p_z + p = \f{\alpha c_0}{U'(0)(c_0+sgn(z)\mu_0)}\K{\f{c_0}{c_0+sgn(z)\mu_0}z}p\OO{0}{t-\f{|z|}{c_0+sgn(z)\mu_0}}.
\end{equation}
Setting $p(z,t) = \e{\lambda t}\psi(z)$, we produce a nonlinear eigenvalue problem $\mathcal{L}(\lambda)\psi  = \lambda \psi$, where the family of operators $\mathcal{L}(\lambda):C^1(\R)\cap L^\infty(\R) \to C^0(\R)\cap L^\infty(\R)$ are defined by 
\begin{equation}
\label{eq: L_psi}
\begin{split}
\mathcal{L}(\lambda)\psi = & - \mu_0\psi' - \psi +\f{\alpha c_0}{U'(0)(c_0+sgn(z)\mu_0)}\K{\f{c_0}{c_0+sgn(z)\mu_0}z} \\
& \times \e{-\f{\lambda |z|}{c_0+sgn(z)\mu_0}} \psi(0).
\end{split}
\end{equation}
Using standard notation, we denote by $\sigma (\mathcal{L})$ the spectrum of $\mathcal{L}$, which is made up of the point spectrum and essential spectrum; the point spectrum is made up of eigenvalues.
We use the following important definition.
\begin{definition} \label{def: spec_stab}
A traveling wave solution to \eqref{eq:paper_main} is {\it spectrally stable} if the following conditions hold:
\begin{itemize}
\item[(i)] The essential spectrum $\sigma_{essential}(\mathcal{L})$ lies entirely to the left of the imaginary axis.
\item[(ii)] The eigenvalue $\lambda = 0$ is algebraically simple.
\item[(iii)] There exists a positive constant $\kappa_0 > 0$ such that $
\max\{\text{Re }\lambda \mid \lambda \in \sigma_{point}(\lambda),\lambda \neq 0\}\leq -\kappa_0$.
\end{itemize}
\end{definition}
\subsubsection*{Essential Spectrum}
The essential spectrum of $\mathcal{L}(\lambda)$ is easily determined by the intermediate eigenvalue problem \\ $\mathcal{L}^\infty \psi = \lambda \psi$, where
\begin{equation}
\mathcal{L}^\infty \psi := -\mu_0\psi' - \psi,
\end{equation}
which is linear in $\psi$ and $\lambda$. A trivial calculation shows the intermediate eigenvalue problem is solved by
\begin{equation}
\label{eq: sol_Linfinity}
\psi_0(\lambda,z) = C(\lambda)\e{-\f{\lambda+1}{\mu_0}z}.
\end{equation}
Assuming $C(\lambda) \neq 0$, the solution $\psi_0(\lambda,z)$ blows up as $z\to -\infty$ when $\text{Re}(\lambda) > -1$ and as $z\to \infty$ when $\text{Re}(\lambda) <-1$. Thus, the essential spectrum is the vertical line 
\begin{equation}
\begin{split}
\sigma_{essential} = \{\lambda \in \C \mid \text{Re}(\lambda) = -1 \},
\end{split}
\end{equation}
which safely stays entirely on the left half plane so \cref{def: spec_stab} (i) is satisfied. Hence, on the domain
\begin{equation*}
\Omega = \{\lambda\in\C \mid \text{Re}(\lambda)\ > -1\},
\end{equation*}
the stability is determined by $\sigma_{point}$, which notably depends on $K$. 

\subsubsection*{Point Spectrum}
Through a series of calculations \cite{CoombesOwen_Evans,Zhang-HowDo}, the eigenvalue problem is solved by
\begin{equation}
\label{eq: L_sol}
\begin{split}
\psi(\lambda,z) = & C(\lambda)\e{-\f{\lambda+1}{\mu_0}z} + \f{\alpha\psi(\lambda,0)}{\mu_0 U'(0)}\int_{-\infty}^z \f{c_0}{c_0+sgn(x)\mu_0}\e{\f{\lambda+1}{\mu_0}(x-z)}
\\
& \times \e{-\f{\lambda |x|}{c_0+sgn(x)\mu_0}}\K{\f{c_0x}{c_0+sgn(x)\mu_0}} \, \mathrm{d}x.
\end{split}
\end{equation}
Since $\psi(\lambda,0)$ appears on the right hand side in \eqref{eq: L_sol}, we must plug in $z = 0$ and solve for $C(\lambda)$ to ensure $\psi$ is well-defined. For $\lambda \in \Omega$, nontrivial solutions $\psi(\lambda,z)$ do not blow up as $z \to \pm \infty$ if and only if $C(\lambda) = 0$. We obtain an Evans function whose zeros entirely determine $\sigma_{point}$. By translation invariance, $\lambda = 0$ is an eigenvalue with eigenfunction $U'(z)$. Hence, the Evan's function, written in terms of the speed index function $\phi$, is given explicitly by
\begin{equation} \tag{\ref{eq:Evans_main}}
\mathcal{E}(\lambda) := 1-\f{\phi(\f{\mu_0}{\lambda + 1})}{\phi(\mu_0)}.
\end{equation}
Since the formulation of $\mathcal{E}$ is not a new result, we present the following lemma without proof. See \cite{Zhang-OnStability} for the details.
\begin{lemma}
\label{lemma: Evans_prop}
\leavevmode
\begin{itemize}
\item[(i)] The Evans function is complex analytic on $\Omega$ and real if $\lambda$ is real.
\item[(ii)] The complex number $\lambda_0 \in \sigma_{point}$ if and only if $\mathcal{E}(\lambda_0) = 0$.
\item[(iii)] The algebraic multiplicity of eigenvalues of $\mathcal{L}(\lambda)$ is exactly equal to the multiplicity of zeros of $\mathcal{E}(\lambda)$.
\item[(iv)] In the domain $\Omega$, the Evans function has the asymptotic behavior $\underset{|\lambda| \to \infty}{\lim} \mathcal{E}(\lambda) = 1$.
\end{itemize}
\end{lemma}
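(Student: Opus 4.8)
The plan is to derive the closed-form expression \eqref{eq:Evans_main} from the matching condition at $z=0$ and then read off all four properties from that formula. First I would set $z=0$ in the solved eigenvalue equation \eqref{eq: L_sol}: applying on $(-\infty,0)$ the change of variable $w = c_0 x/(c_0-\mu_0)$ used in \cref{subsec: step1} collapses the remaining integral to $\int_{-\infty}^0 e^{\frac{c_0(\lambda+1)-\mu_0}{c_0\mu_0}x}K(x)\,\mathrm{d}x = \phi(\mu_0/(\lambda+1))$, while \eqref{eq:frontderiv_formal} at $z=0$ gives $U'(0)=\frac{\alpha}{\mu_0}\phi(\mu_0)$; together these produce the identity $C(\lambda)=\psi(\lambda,0)\,\mathcal{E}(\lambda)$. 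For $\lambda\in\Omega$ the homogeneous mode $C(\lambda)e^{-\frac{\lambda+1}{\mu_0}z}$ grows as $z\to-\infty$, so a nontrivial bounded eigenfunction forces $C(\lambda)=0$; since $C(\lambda)=0$ together with $\psi(\lambda,0)=0$ would force $\psi\equiv 0$ in \eqref{eq: L_sol}, we must have $\psi(\lambda,0)\neq 0$, whence $\mathcal{E}(\lambda)=0$; conversely, when $\mathcal{E}(\lambda)=0$ the exponential bound $|K(x)|\le Ce^{-\rho|x|}$ guarantees the surviving integral term decays at both ends, giving a genuine eigenfunction. This is part (ii).

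For part (i): the exponential bound on $K$ makes the defining integral converge absolutely and depend holomorphically on $\lambda$ in $\Omega$; the map $\lambda\mapsto \mu_0/(\lambda+1)$ is holomorphic on $\Omega$ (which excludes $\lambda=-1$), and $\phi(\mu_0)=\frac{\mu_0}{\alpha}U'(0)>0$ is nonzero, so $\mathcal{E}$ is holomorphic on $\Omega$; when $\lambda$ is real, $\mu_0/(\lambda+1)$ is real and $\phi$ evaluated there is a real integral, so $\mathcal{E}(\lambda)\in\R$. For part (iv): as $|\lambda|\to\infty$ in $\Omega$ the exponent $\frac{c_0(\lambda+1)-\mu_0}{c_0\mu_0}$ either has real part tending to $+\infty$ (when $\text{Re}\,\lambda\to\infty$), forcing $\phi(\mu_0/(\lambda+1))\to 0$ by dominated convergence, or has imaginary part tending to $\pm\infty$ with real part staying bounded below (when $\text{Re}\,\lambda$ stays bounded), forcing $\phi(\mu_0/(\lambda+1))\to 0$ by the Riemann--Lebesgue lemma applied to the weighted $L^1$ kernel; either way $\mathcal{E}(\lambda)\to 1$.

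Part (iii) is the delicate one and is where I expect the real work. The claim is that the algebraic multiplicity of $\lambda_0$ as an eigenvalue of the $\lambda$-dependent family $\mathcal{L}(\lambda)$ — measured by the maximal length of a Jordan chain of generalized eigenfunctions of the nonlinear eigenvalue problem — equals the order of vanishing of $\mathcal{E}$ at $\lambda_0$. The approach is to differentiate the matching identity $C(\lambda)=\psi(\lambda,0)\,\mathcal{E}(\lambda)$ repeatedly at $\lambda_0$: a chain of length $m$ exists precisely when the $\lambda$-derivatives of the forced compatibility condition vanish through order $m-1$, and after the substitutions above these conditions reduce exactly to $\mathcal{E}(\lambda_0)=\mathcal{E}'(\lambda_0)=\cdots=\mathcal{E}^{(m-1)}(\lambda_0)=0$. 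Specializing to $\lambda_0=0$ then yields algebraic simplicity, as needed later for \cref{def: spec_stab}(ii): translation invariance gives $\mathcal{E}(0)=0$ with eigenfunction $U'$, and $\mathcal{E}'(0)=\frac{\mu_0}{\phi(\mu_0)}\phi'(\mu_0)\neq 0$ because \cref{lemma: unique_speed} shows $\phi$ is strictly increasing at $\mu_0$ (its critical points occur only where $\phi>\frac12$ or $\phi<0$, whereas $\phi(\mu_0)=\frac12-\frac{\theta}{\alpha}\in(0,\frac12)$). The main obstacle is making this Jordan-chain-to-Taylor-coefficient dictionary fully rigorous — in particular justifying the termwise $\lambda$-differentiation of the nonlocal operator in \eqref{eq: L_psi} and the boundedness of the generalized eigenfunctions — which is standard for closed-form Evans functions of this type, so for those details I would follow \cite{Zhang-OnStability}.
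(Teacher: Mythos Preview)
Your proposal is correct in outline and, in fact, more detailed than what the paper itself provides: the paper presents \cref{lemma: Evans_prop} \emph{without proof}, simply stating that the formulation of $\mathcal{E}$ is not new and referring the reader to \cite{Zhang-OnStability} for the details. So there is no ``paper's own proof'' to compare against; your sketch is essentially a reconstruction of the standard argument that the paper defers to, and you end up citing the same reference for the Jordan-chain correspondence in part~(iii), which is indeed where the technical work lies.
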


\subsubsection*{Point Spectrum $\lambda \in \R^+ \cup \{0\}$}
We already know $\lambda = 0$ is an eigenvalue of multiplicity at least one. Moreover,
\begin{equation}
\label{eq: Evans_deriv}
\mathcal{E}'(\lambda) = \f{\mu_0}{(\lambda + 1)^2}\f{\phi '(\f{\mu_0}{\lambda +1})}{\phi(\mu_0)},
\end{equation}
which implies $\mathcal{E}'(0) = \mu_0\f{\phi'(\mu_0)}{\phi(\mu_0)} > 0$ for all $K$ in classes $\mathcal{A}_{j,k}$, $\mathcal{B}_{j,k}$, and  $\mathcal{C}_{j,k}$. By \cref{lemma: Evans_prop} (iii), it follows that $\lambda = 0$ is a simple eigenvalue so \cref{def: spec_stab} (ii) is satisfied.

We now consider the behavior of $\mathcal{E}(\lambda)$ for $\lambda$ on the positive real axis. For such $\lambda$, by uniqueness of the wave speed, $\phi(\f{\mu_0}{\lambda + 1}) \neq \phi(\mu_0)$. Therefore, $\mathcal{E}(\lambda) \neq 0$ so $\lambda$ cannot be an eigenvalue.
\subsubsection*{Point Spectrum Re$(\lambda)\geq 0$, Im$(\lambda)\neq0$}

We are finally brought to the most difficult part of the stability analysis: if any, where are the zeros of $\mathcal{E}(\lambda)$ when $\text{Re}(\lambda) \geq 0$ and $\text{Im}(\lambda) \neq 0$? We recall from \cref{subsubsec: math_diff} that in its fullest generality, this problem is still open.


In pursuit of a meaningful stability result, we now exploit uniqueness and prove \cref{thm: stability}. As evident from the analysis above, the following lemma completes the proof.

\begin{lemma}\label{lemma: point_spectrum}
Suppose that $0<2\theta<\alpha$, $K$ is in class $\mathcal{A}_{j,k}$, $\mathcal{B}_{j,k}$, or $\mathcal{C}_{j,k}$ for some integers $j$ and $k$, and $U$ is a unique solution described in \cref{thm: E_and_U}. If the Laplace transform of $K$ satisfies
\begin{equation}\label{eq: Laplace}
\integ{x}{-\infty}[0]<\e{s x}K(x)>=\f{p(s)}{q(s)},
\end{equation}
where $p$ and $q$ are polynomials of degree at most two, then $\mathcal{E}(\lambda)\neq 0$ when Re$(\lambda)\geq 0$, Im$(\lambda)\neq0$.
\end{lemma}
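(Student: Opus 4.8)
The plan is to reduce the non-vanishing of $\mathcal{E}(\lambda)$ on $\{\mathrm{Re}(\lambda)\ge 0,\ \mathrm{Im}(\lambda)\ne 0\}$ to an explicit rational-function computation. Write $s_0 = \frac{c_0-\mu_0}{c_0\mu_0}$, so that $\phi(\mu_0) = \int_{-\infty}^0 e^{s_0 x}K(x)\,\mathrm{d}x = p(s_0)/q(s_0)$ by the Laplace-transform hypothesis. For a general $\lambda$ in the region of interest, a direct substitution into \eqref{eq: speed_index} gives
\begin{equation*}
\phi\!\left(\tfrac{\mu_0}{\lambda+1}\right) = \integ{x}{-\infty}[0]<\e{\left(s_0 + \frac{\lambda}{\mu_0}\right)x}K(x)> = \f{p\!\left(s_0+\frac{\lambda}{\mu_0}\right)}{q\!\left(s_0+\frac{\lambda}{\mu_0}\right)},
\end{equation*}
valid because $\mathrm{Re}\!\left(s_0+\frac{\lambda}{\mu_0}\right) = s_0 + \frac{\mathrm{Re}(\lambda)}{\mu_0} > 0$ on $\Omega$, so the integral converges and agrees with the analytic continuation of the rational function $p/q$. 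Thus $\mathcal{E}(\lambda) = 0$ is equivalent to
\begin{equation*}
p\!\left(s_0 + \tfrac{\lambda}{\mu_0}\right) q(s_0) = p(s_0)\, q\!\left(s_0 + \tfrac{\lambda}{\mu_0}\right),
\end{equation*}
i.e., writing $w = \lambda/\mu_0$, the polynomial $G(w) := p(s_0+w)q(s_0) - p(s_0)q(s_0+w)$ has a zero at $w$. Since $\deg p,\deg q \le 2$, $G$ is a polynomial of degree at most $2$ in $w$, and $G(0) = 0$, so $G(w) = w\,(c_1 + c_2 w)$ for constants $c_1,c_2$ depending on $s_0$ and the coefficients of $p,q$. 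Hence the only possible nonzero root is $w_* = -c_1/c_2$, giving at most one extra eigenvalue candidate $\lambda_* = \mu_0 w_*$.

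The next step is to identify $\lambda_*$ and show it is real. Here I would use the facts already assembled in the paper: $\lambda = 0$ is an eigenvalue, and by \cref{lemma: unique_speed} together with \eqref{eq: Evans_deriv}, $\mathcal{E}'(0) > 0$; also $\mathcal{E}$ is real on the real axis and $\mathcal{E}(\lambda)\to 1$ as $|\lambda|\to\infty$ (\cref{lemma: Evans_prop}(i),(iv)), and $\mathcal{E}(\lambda)\ne 0$ for $\lambda$ real and positive (shown just before the lemma). The coefficients $c_1,c_2$ are real (they are built from $p,q$, which have real coefficients since $K$ is real-valued, and from the real number $s_0$), so $w_*=-c_1/c_2$ is real, hence $\lambda_* = \mu_0 w_*$ is real. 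Therefore the unique possible nonzero zero of $\mathcal{E}$ lies on the real axis and so has $\mathrm{Im}(\lambda_*) = 0$; consequently $\mathcal{E}(\lambda)\ne 0$ whenever $\mathrm{Re}(\lambda)\ge 0$ and $\mathrm{Im}(\lambda)\ne 0$, which is exactly the claim. (One should also check the degenerate cases: if $c_2 = 0$ there is no nonzero root at all and we are done immediately; if $q(s_0 + \lambda/\mu_0)$ vanishes somewhere in $\Omega$ this would be a pole of $\phi$, contradicting convergence of the defining integral, so $q$ has no zeros with real part $> s_0$ and the rational identity above is legitimate throughout $\Omega$.)

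The main obstacle I anticipate is not any single estimate but the bookkeeping needed to make the ``degree at most two'' hypothesis do its work cleanly: one must verify that $G$ really is quadratic (not higher) after the substitution, handle the cases $\deg p < 2$ or $\deg q < 2$ uniformly, and rule out spurious cancellations where $c_2 = 0$ forces $c_1 = 0$ as well (in which case $\mathcal{E}\equiv$ something degenerate and one falls back on $\mathcal{E}(\lambda)\to 1$ plus analyticity). A secondary subtlety is confirming that the analytic function $\phi(\mu_0/(\lambda+1))$ on $\Omega$ genuinely coincides with the rational expression $p(s_0+\lambda/\mu_0)/q(s_0+\lambda/\mu_0)$ rather than merely agreeing on the real axis — this follows from the identity theorem once convergence of the integral on all of $\Omega$ is established, but it should be stated explicitly. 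Once these points are nailed down, the quadratic count immediately limits the eigenvalues on $\{\mathrm{Re}(\lambda)\ge 0\}$ to $\{0,\lambda_*\}$ with $\lambda_*$ real, and the proof of \cref{thm: stability} is complete.
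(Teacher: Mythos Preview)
Your proposal is correct and follows essentially the same route as the paper: both arguments use the rational-function hypothesis to reduce $\mathcal{E}(\lambda)=0$ to a real-coefficient polynomial equation of degree at most two (your $G(w)=w(c_1+c_2 w)$ versus the paper's $\overline{p}(\lambda)=(\tfrac12-\tfrac{\theta}{\alpha})q-p$), observe that $\lambda=0$ is one root, and conclude the remaining root must be real. The only cosmetic difference is that the paper writes the affine substitution as $s=(\lambda+1)/\mu_0$ while you write $s=s_0+\lambda/\mu_0$, and the paper additionally argues $\lambda_*<0$ via uniqueness of $\mu_0$, which is not needed for the lemma as stated but is used immediately afterward for \cref{def: spec_stab}(iii).
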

\begin{proof}
By the definition of $\mathcal{E}$, any roots satisfy $\phi\left(\f{\mu_0}{\lambda+1}\right)=\phi(\mu_0)=\f{2}-\f{\theta}{\alpha}$ so
$$
\f{2}-\f{\theta}{\alpha}= \f{p\left(\f{\lambda+1}{\mu_0}\right)}{q\left(\f{\lambda+1}{\mu_0}\right)},
$$
which means
$$
\overline{p}(\lambda):=\left(\f{2}-\f{\theta}{\alpha}\right) q\left(\f{\lambda+1}{\mu_0}\right)-p\left(\f{\lambda+1}{\mu_0}\right)
$$
is a polynomial of degree at most two and $\overline{p}(\lambda)=0$ if and only if $\mathcal{E}(\lambda)=0$. Therefore, $\overline{p}(0)=\mathcal{E}(0)=0$ so if $\overline{p}$ is of degree one, then we are done. Otherwise, by simplicity, $\overline{p}$ must have one other distinct real root, say $\lambda_*$. If $\lambda_* >0$, then $\phi\left(\f{\mu_0}{\lambda_*+1}\right)=\f{2}-\f{\theta}{\alpha}$, contradicting the uniqueness of $\mu_0$. Therefore, $\lambda_*<0$ and the claim follows.
\end{proof}
\subsubsection*{Proof of \cref{thm: stability}}
By combining the results in this section, we see that \cref{def: spec_stab} (i)-(iii) are satisfied and the proof of spectral stability is complete. Finally, if $c_0=\infty$, the result in  \cite{Sandstede-EvansFunctions} shows the equivalence of spectral, linear and nonlinear stability, completing the proof of \cref{thm: stability}.

\begin{corollary}\label{cor: sin_cos_stab}
 Suppose that $0<2\theta<\alpha$, $K$ is in class $\mathcal{A}_{j,k}$, $\mathcal{B}_{j,k}$, or $\mathcal{C}_{j,k}$ for some integers $j$ and $k$, and $U$ is a unique solution described in \cref{thm: E_and_U} with $K$ of the form
 $$
 K(x)= \e{-a|x|}(b\cos(cx+d)+e\sin(cx+f)).
 $$
Then $U$ is spectrally stable.
\end{corollary}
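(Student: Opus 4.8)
The plan is to deduce \cref{cor: sin_cos_stab} directly from \cref{thm: stability} by an explicit Laplace-transform computation: it suffices to evaluate the one-sided transform $\integ{x}{-\infty}[0]<\e{sx}K(x)>$ and to verify that it is a rational function of $s$ whose numerator and denominator both have degree at most two. On the negative half-line $|x|=-x$, so $K(x)=\e{ax}\bigl(b\cos(cx+d)+e\sin(cx+f)\bigr)$; expanding with the angle-addition identities rewrites this as a fixed linear combination, with coefficients assembled from $b,c,d,e,f$, of the two functions $\e{ax}\cos(cx)$ and $\e{ax}\sin(cx)$.

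Next I would record the elementary transforms
\begin{align*}
\integ{x}{-\infty}[0]<\e{(s+a)x}\cos(cx)>=\f{s+a}{(s+a)^2+c^2},\qquad \integ{x}{-\infty}[0]<\e{(s+a)x}\sin(cx)>=\f{-c}{(s+a)^2+c^2},
\end{align*}
valid whenever $\text{Re}(s+a)>0$ (convergence at $-\infty$ uses $a>0$, part of the standing exponential-bound assumption on $K$). Forming the linear combination identified in the first step then gives
\begin{equation*}
\integ{x}{-\infty}[0]<\e{sx}K(x)>=\f{p(s)}{q(s)},\qquad q(s):=(s+a)^2+c^2,
\end{equation*}
with $\deg q=2$ and $\deg p\le 1$; in the degenerate case $c=0$ the kernel is a multiple of $\e{-a|x|}$ and the transform reduces to $(b\cos d+e\sin f)/(s+a)$, still of the required shape. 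In all cases $p$ and $q$ are polynomials of degree at most two, so \eqref{eq: Laplace} holds and \cref{thm: stability} yields the spectral stability of $U$.

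I do not expect a genuine obstacle here; the work is a short, routine computation. The only points deserving a moment's attention are: (a) the angle-addition expansion cannot increase the degree in $s$, since every summand it produces carries the common quadratic denominator $(s+a)^2+c^2$; (b) the rational identity is invoked only where it is legitimate, since the region $\text{Re}(\lambda)\ge 0$ relevant to \cref{def: spec_stab} corresponds through $\phi$ to arguments $s=\f{\lambda+1}{\mu_0}-\f{c_0}$ with $\text{Re}(s)\ge\f{\mu_0}-\f{c_0}>0$ because $\mu_0\in\OO{0}{c_0}$, so the poles $s=-a\pm ic$ are never approached; and (c) the statement is non-vacuous precisely because its hypothesis already places $K$ in one of $\mathcal{A}_{j,k}$, $\mathcal{B}_{j,k}$, $\mathcal{C}_{j,k}$, so nothing further need be verified about the existence or uniqueness of the underlying front.
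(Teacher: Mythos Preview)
Your proposal is correct and follows the same approach the paper intends: the corollary is stated immediately after \cref{thm: stability} with no separate proof, so it is meant to follow at once by verifying that the one-sided Laplace transform of this $K$ is rational of degree at most two, which is exactly the computation you carry out. Your treatment is simply a more explicit version of what the paper leaves implicit.
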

\begin{corollary}
Suppose that $0<2\theta<\alpha$ and $K(x)= \e{-a|x|}(-b|x|+c)$ with $a,\, b,\, c$ positive and $K$ normalized. Then there exists unique and stable front solutions to \cref{eq:paper_main}.
\end{corollary}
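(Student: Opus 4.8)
The plan is to recognize that $K$ is a lateral inhibition kernel already covered by the general theory, so that the corollary reduces to verifying the hypotheses of \cref{cor: EU_MH} and \cref{thm: stability}. Write $K(x)=A\,\e{-a|x|}(c-b|x|)$, where $A>0$ is the normalization constant; since $\integ{x}{0}[\infty]<\e{-ax}(c-bx)>=(ca-b)/a^2$ must be positive, normalizability forces $ca>b$, and then $\integ{x}{-\infty}[0]<K(x)>=\integ{x}{0}[\infty]<K(x)>=\f12$ and $A=a^2/\bigl(2(ca-b)\bigr)$. First I would note that $c-b|x|$ vanishes exactly at $|x|=c/b$, so $K(x)\ge 0$ on $(-c/b,c/b)$ and $K(x)\le 0$ outside; thus $K$ is a lateral inhibition kernel in the sense of \cref{def: MH} with $M_1=M_2=c/b$, and $|K(x)|\le A(c+b|x|)\e{-a|x|}\le C\e{-(a/2)|x|}$ shows it is exponentially bounded. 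Since $0<2\theta<\alpha$, \cref{cor: EU_MH} then delivers a unique traveling front $U$ of the type described in \cref{thm: E_and_U}, with no restriction on $\integ{x}{-\infty}[0]<|x|K(x)>$.

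For the stability assertion I would check the two hypotheses of \cref{thm: stability}, equivalently of \cref{lemma: point_spectrum}. For class membership, the proof of \cref{cor: lat_unique_speed} shows that a normalized Mexican hat kernel lies in class $\mathcal{A}_{1,1}$ when $\integ{x}{-\infty}[0]<|x|K(x)>\ge 0$ and in class $\mathcal{B}_{1,1}$ otherwise; here $\integ{x}{-\infty}[0]<|x|K(x)>=A(ca-2b)/a^3$, so $K\in\mathcal{A}_{1,1}$ if $ca\ge 2b$ and $K\in\mathcal{B}_{1,1}$ if $b<ca<2b$ — in either case one of the admissible classes. For the Laplace condition, I would compute, for $\text{Re}(s)>-a$,
\begin{align*}
\integ{x}{-\infty}[0]<\e{sx}K(x)>
&= A\integ{x}{-\infty}[0]<\e{(s+a)x}(c+bx)> = A\left(\f{c}{s+a}-\f{b}{(s+a)^2}\right)\\
&= \f{A\left(cs+ca-b\right)}{(s+a)^2},
\end{align*}
which is of the form \eqref{eq: Laplace} with $p(s)=A(cs+ca-b)$ of degree one and $q(s)=(s+a)^2$ of degree two, both at most two. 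Hence \cref{thm: stability} applies and $U$ is spectrally stable; if in addition $c_0=\infty$, the same theorem also yields linear and nonlinear stability.

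I do not anticipate a genuine obstacle: the corollary is essentially bookkeeping on top of \cref{lemma: unique_speed} and \cref{lemma: point_spectrum}. The two points that need care are (i) confirming that $K$ satisfies $\mathcal{L}_1$ — the zero of $K$ at $x=-c/b$ is transverse with $K'(-c/b)=Ab\,\e{-ac/b}>0$, so the sign pattern required by $\mathcal{L}_j$ holds with $j=1$, whence $\mathcal{R}_1$ follows from symmetry via \cref{remark: sym}, and the side inequalities attached to $\mathcal{L}_j$ and $\mathcal{R}_k$ are vacuous since $j=k=1$; and (ii) respecting the half-plane of convergence $\text{Re}(s)>-a$ in the Laplace computation so that the boundary contributions at $-\infty$ vanish. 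Everything else is immediate from the results already established.
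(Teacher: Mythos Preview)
Your proposal is correct and follows exactly the route the paper intends: the corollary is stated without proof immediately after \cref{cor: sin_cos_stab}, and its content is precisely the verification you carry out --- identify $K$ as a lateral inhibition kernel (hence in class $\mathcal{A}_{1,1}$ or $\mathcal{B}_{1,1}$ via \cref{cor: lat_unique_speed}), invoke \cref{cor: EU_MH} for existence and uniqueness, and check the rational Laplace transform hypothesis of \cref{thm: stability}. Your observations that normalization forces $ca>b$ and that the threshold side conditions in $\mathcal{L}_1$, $\mathcal{R}_1$ are vacuous for $j=k=1$ are the only details worth recording, and you have them right.
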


\subsubsection*{Computational Methods}
We close this section by pointing out that since $\mathcal{E}$ is complex analytic, there are a number of options at our disposal. By \cref{lemma: Evans_prop} (iv) and the fact that zeros of $\mathcal{E}$ must be isolated, the number of zeros on the right half plane is a finite number and contained in the region $$
B_{\delta,R} = \{\lambda \in \Omega \mid \text{Re}(\lambda)\geq0 \text{ and }\delta\leq|\lambda| \leq R\}
$$ 
for some $R\gg 0$, $\delta>0$. Common tools like maximum principle or argument principle can be applied.

For example, it is certainly true that $|1-\mathcal{E}(\lambda)| < 1$ when $|\lambda| = R$. If we can show $|1-\mathcal{E}(\lambda)| < 1$ along the imaginary axis when $\delta \leq|\lambda| \leq R$, then $|1-\mathcal{E}(\lambda)| < 1$ on $B_{\delta,R}$ by the maximum principle. Hence $|\mathcal{E}(\lambda)|>0$ on $B_{\delta,R}$ and stability follows. 

Finally, when specific kernels are given, we can use mathematical software to look at real and imaginary contour plots of $\mathcal{E}(\lambda)$ (as was done in \cite{CoombesOwen_Evans}) or surface plots of $|\mathcal{E}(\lambda)|$ on the right half plane. In the examples given in Figure \ref{fig:Evans_mag}, we have chosen the latter option.

\begin{figure}[H]
\centering
\includegraphics[width=130mm]{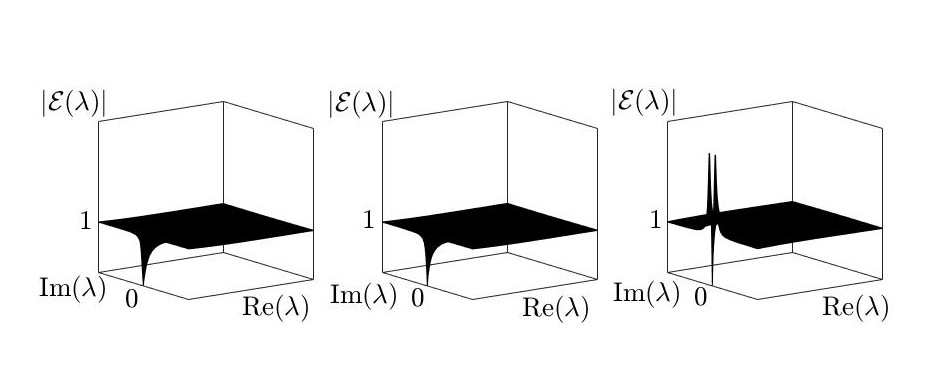}
\caption{Surface plot of $|\mathcal{E}(\lambda)|$ with kernels $K_1$ (left), $K_2$, (center), and $K_3$ (right) when Re$(\lambda)\geq 0$. In all cases, $|\mathcal{E}(\lambda)|>0$ except at $\lambda = 0$, showing spectral stability.}
\label{fig:Evans_mag}
\end{figure} \rule{0ex}{0ex}

\section{Classifying Example $K_2$ With Varying Thresholds} \label{sec: Example_K2}
In this section, we look further at the one parameter, symmetric kernel type discussed in \cite{ExploitingtheHamiltonian}:
\begin{equation}\label{eq: kernel_elvin}
K(x;a):= C(a)\e{ -a |x|}(a \sin(|x|) + \cos(x)),
\end{equation}
where $a>0$ and $C(a)$ is chosen as a normalizing constant. Our goal is to use the assumptions in \cref{sec: kernel_constructions} to fully classify existence, uniqueness, and stability of fronts by class. Throughout this section, assume $c_0=\infty$, $\alpha=1$, and $0<\theta<\f{2}$.
\subsection*{Wave Speed Conditions ($A_n$) and ($B_n$)} \label{subsec: Ex_AnBn}
Define
$$
f(a):=\integ{x}{\infty}[0]<|x|K(x;a)>.
$$
For fixed $a>0$, it is clear from the definition of wave speed conditions in \cref{sec: unique_assumptions}, the requirements (i): $f(a)\geq 0$ and (ii): $f(a)\leq 0$ are necessary in order for $K$ to satisfy ($A_n$) and ($B_m$) for some $n$ and $m$ respectively.

Regarding \eqref{eq: kernel_elvin}, we see from Figure \ref{fig: plot_f} that there exists a unique $a^*>0$ such that $f(a)\leq 0$ for $a\leq a^*$ and $f(a)\geq 0$ for $a\geq a^*.$  We omit the details, but it can be shown that for $a\neq a^*$, requirements (i) and (ii) are also sufficient. Indeed, $K$ satisfies ($A_2$) for $a\geq a^*$ and ($B_2$) for $a<a^*$.
\begin{figure}[H]
\centering
\includegraphics[width=130mm]{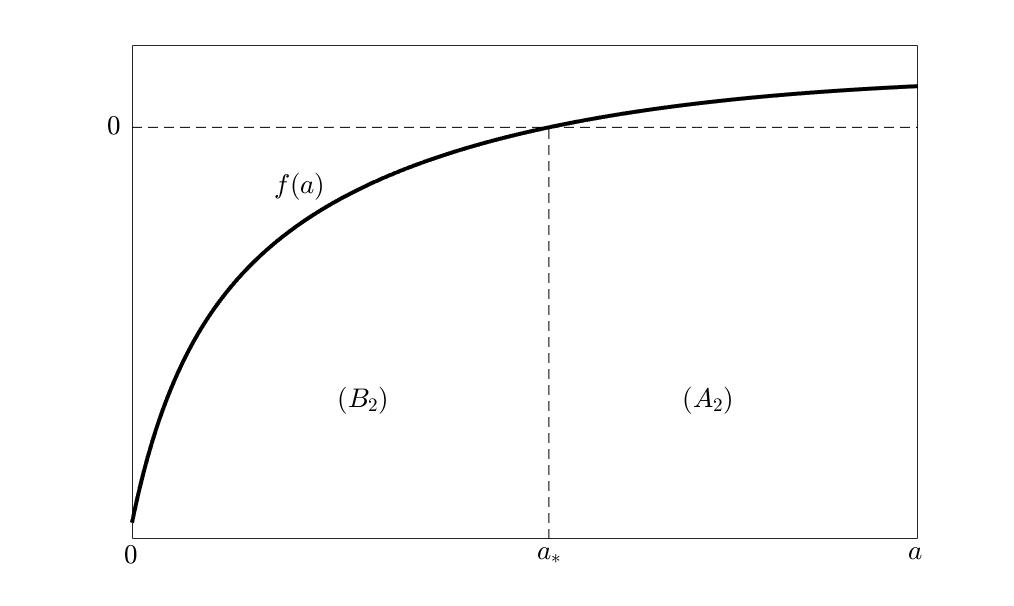}
\caption{Plot of $f(a)$. When $f(a_*)=0$, the kernel changes from satisfying wave speed condition ($B_2$) to ($A_2$). A numerical calculation shows $a_*=.5774$.}
\label{fig: plot_f}
\end{figure}
\subsection*{Unique Wave Speed} \label{subsec: ex_wavespeed}
We recall that the wave speed is determined by solutions to $\phi(\mu)=\f{2}-\theta$. By the definition of $\phi$ and using $K$ from \eqref{eq: kernel_elvin}, a series of calculations reveals $\f{\mu}$ is obtained as roots to the quadratic equation
\begin{equation}\label{eq: wave_quadratic}
c_2 x^2 + c_1 x + c_0=0,
\end{equation}
where
\begin{align*}
c_2(a,\theta) &= 2a(1-2\theta), \\
c_1(a,\theta) &= 3a^2-8a^2 \theta - 1, \\
c_0(a,\theta) &= 4a\theta(a^2-1).
\end{align*}
From above, we know $K$ satisfies either ($A_2$) or ($B_2$) so we should obtain a unique positive wave speed. Hence, it follows that \eqref{eq: wave_quadratic} has two real solutions, $x_+>0$ and $x_-<0$. Then $\mu(a,\theta)=\f{x_+(a,\theta)}$.
\subsection*{Threshold Analysis}\label{subsec: ex_thres}
 First of all, since $K$ is symmetric, 
 $$
 \integ{x}{-M_{2j}}[0]<K(x;a)>= \integ{x}{0}[N_{2j}]<K(x;a)>
 $$
so if $K$ satisfies $\mathcal{L}_{\infty}$, then for all $j\geq 1$,
$$
\integ{x}{0}[N_{2j}]<K(x;a)>=\integ{x}{-M_{2j}}[0]<K(x;a)> >\f{2}-\theta>\theta-\f{2}
$$
and $\mathcal{R}_{\infty}$ is also satisfied. Therefore, we only need to confirm $\mathcal{L}_{\infty}$. On that note, we observe that for all $a$, the zeros of $K$ on the left half plane are defined by $-M_{j}(a):=-(\arctan{\left(\f{a}\right)}+(j-1)\pi)$. Clearly
$$
\integ{x}{-M_{2(j+1)}}[-M_{2j}]<K(x;a)> >C(a) \e{-a M_{2j+1}}\integ{x}{-M_{2(j+1)}}[-M_{2j}]<-a\sin(x)+\cos(x)>=0.
$$
Therefore, the minimum value of $\integ{x}{-M_{2j}}[0]<K(x)>$ occurs when $j=1$ and recalling $\mathcal{L}_{\infty}$ requires 
\begin{align*}
\integ{x}{-M_{2j}}[0]<K(x;a)> >\f{2}-\theta \qquad \text{for } j\geq 1,
\end{align*}
it follows that $\mathcal{L}_\infty$ is satisfied if and only if $g(a)<\theta$, where 
\begin{align}
g(a):=\f{2}-\integ{x}{-M_2(a)}[0]<K(x;a)>.
\end{align}
\subsubsection*{Existence, Uniqueness, and Stability}
By \cref{thm: E_and_U}, we have demonstrated the existence and uniqueness of fronts in the region
\begin{equation}\label{eq: region_R}
\mathcal{R}=\lbrace(a,\theta)\in \R^+\times \left(0,\f{2}\right) \mid g(a)<\theta\rbrace.
\end{equation}
Finally, it is clear from the wave speed calculation that solutions are stable by \cref{cor: sin_cos_stab}. See Figure \ref{fig: Kelvin_fullclassification}.

\begin{remark}
There may be pairs $(a,\theta)\not\in \mathcal{R}$ where $\mathcal{L}_{\infty}$ is not satisfied, but fronts still exist. Such fronts will be unique and stable by the analysis above.
\end{remark}

\begin{figure}[H]
\centering
\includegraphics[width=130mm]{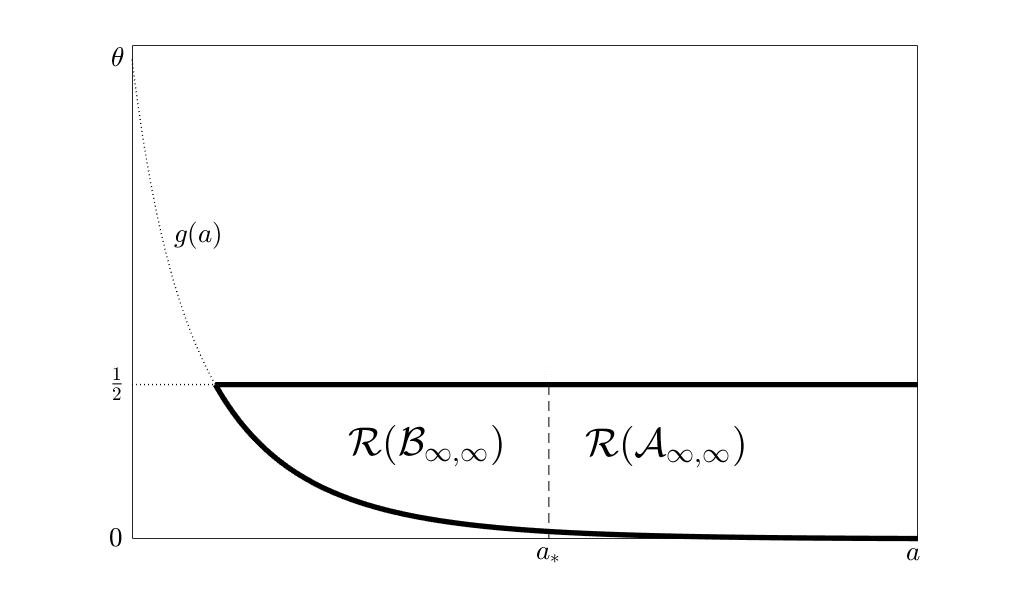}
\caption{Inside the dark lines: plot of region $\mathcal{R}$ on the $a\theta$ plane. All pairs $(a,\theta)\in \mathcal{R}$ lead to unique, stable traveling fronts. When $f(a_*)=0$, the kernel changes from class $\mathcal{B}_{\infty,\infty}$ to class $\mathcal{A}_{\infty,\infty}$.}
\label{fig: Kelvin_fullclassification}
\end{figure}



\section{Existence of Traveling Pulses}
\label{sec: pulses}
With some exceptions, traveling wave fronts alone are typically of somewhat incomplete value since they unrealistically ignore metabolic processes that generate negative feedback. Their main value comes from their close connection with traveling pulses; studying the existence and stability of the front is the preparation for studying the existence and stability of pulses. Since the methodology is more unclear for multiple pulses, we will only focus on single pulses.

Mathematically, pulses can be categorized as slow or fast based on the wave speed.  Slow pulses are unstable perturbations of standing pulses, while fast stable pulses have a singular structure partially comprised of stable fronts and backs. For simplicity, we will ignore slow pulses since they are biologically less important.

Incorporating linear feedback into \eqref{eq:paper_main}, we obtain the following system of integral equations \cite{PintoandErmentrout-SpatiallyStructuredActivityinSynapticallyCoupledI.TravelingWaves,Pinto2005}:
\begin{align}
u_t + u + w & = \alpha \integ{y}{\R}[]<K(x-y)H(u(y,t-\f{c_0}|x-y|)-\theta)>,  \label{eq: pulse1}\\
w_t & = \epsilon(u-\gamma w), \label{eq: pulse2}
\end{align}
where $w$ is a slow leaking current and $0<\epsilon\ll1$ is a perturbation parameter. Biologically, $w$ represents phenomenon like spike frequency adaptation.

By a fast pulse solution to \eqref{eq: pulse1}-\eqref{eq: pulse2}, we mean a solution $(u,w) = (U_{pulse}(z),W_{pulse}(z))$ such that there exists a constant $\Z(\epsilon)>0$ such that $U_{pulse}(0)=U_{pulse}(\Z(\epsilon))=\theta, U_{pulse}(z) > \theta$ on $\OO{0}{\Z(\epsilon)}$, $U_{pulse}(z) < \theta$ on $\OO{-\infty}{0}\cup \OO{\Z(\epsilon)}{\infty}$, and $\ds \lim_{z\to \pm \infty} (U_{pulse}(z),W_{pulse}(z)) = (0,0)$. Here, $z = x + \mu(\epsilon) t$ is the traveling coordinate with unique fast wave speed $\mu(\epsilon) = \mu_{front} + \kappa(\epsilon)$, where $\kappa(\epsilon)\to 0$ as $\epsilon \to 0$. 

Since the system is singularly perturbed, we cannot simply set $\epsilon=0$ and obtain a solution that reasonably approximates a pulse. However, we can construct singular homoclinical orbits in phase space and argue that for $0<\epsilon \ll 1$, real pulses are close to singular pulses. Such a singular homoclinical orbit is comprised of a front, back, and two space curves. The front and back (which have the same wave speed) are understood to capture the fast dynamics, while the two space curves are where slow time dynamics occur. Naturally, as pointed out in \cite{PintoandErmentrout-SpatiallyStructuredActivityinSynapticallyCoupledI.TravelingWaves}, the subject of geometric singular perturbation theory may be a method for proving pulses exist. However, working out the details is quite nontrivial since \eqref{eq: pulse1}-\eqref{eq: pulse2} is not necessarily autonomous; applying geometric singular perturbation theory rigorously to \eqref{eq: pulse1}-\eqref{eq: pulse2} is still an open problem for general kernel functions. 

We remark that many kernels presented in this paper (as evident by the examples given) allow us to convert \eqref{eq: pulse1}-\eqref{eq: pulse2} into a system of real, nonlinear, autonomous PDEs. The dynamics then reside in a more familiar setting to possibly apply techniques like the celebrated Exchange Lemma \cite{JonesKopell} in the setting of center-stable and center-unstable manifold theory. The main setback in our problem is that the Heaviside firing rate function creates phase space dynamics with discontinuities; it is important that this issue is properly dealt with in order to apply such a technical result. Some promising partial results have been obtained in \cite{Faye2013,Faye2015} for related models with smooth firing rate functions.

Pulses have also been proven to exist using more direct computational tools like implicit function theorem, as was done in \cite{Pinto2005} when $K(x) = \f{\rho}{2}\e{-\rho |x|}$. Although this is a powerful result, the exact structure of $K$ played an important role. Again, it is not entirely clear how to rigorously prove the existence of pulses to \eqref{eq: pulse1}-\eqref{eq: pulse2} for general kernel functions.

\subsection*{Numerically Computed Fast Pulses}
In this subsection, we first derive formulas for fast pulses formally and then calculate pulses with example kernels $K_1, K_2$, and $K_3$ respectively. Using phase space plots, we show that our solutions are real pulses. To simplify the discussion, we assume $c_0 = \infty$.

We wish to find traveling pulse solutions to \eqref{eq: pulse1}-\eqref{eq: pulse2} when $0<2\theta < \alpha < \f{(1+\gamma)\theta}{\gamma}$,  $0<\epsilon \ll 1$ and $K$ is a kernel such that a unique front is produced when $\epsilon = 0$. Assuming a pulse exists with $\mu(\epsilon)$ and $\mathcal{Z}(\epsilon)$ to be determined, \eqref{eq: pulse1}-\eqref{eq: pulse2} reduces to
\begin{align}
\mu(\epsilon) U' + U + W & = \alpha \integ{x}{z-\mathcal{Z(\epsilon)}}[z]<K(x)>, \label{eq: fastpulseformal1} \\
\mu(\epsilon) W' & = \epsilon(U-\gamma W), \label{eq: fastpulseformal2}
\end{align}
which is easily solved using elementary techniques. The solution takes on the form
\begin{align}
U_{pulse}(z) & = \f{\alpha \gamma}{1+\gamma}\integ{x}{z-\mathcal{Z}(\epsilon)}[z]<K(x)> -\alpha \integ{x}{-\infty}[z]<C(x-z,\mu(\epsilon),\epsilon)\left[K(x)-K(x-\mathcal{Z}(\epsilon))\right]>, \label{eq: Upulseformal} \\
U_{pulse}'(z) & = \alpha \integ{x}{-\infty}[z]<C_x(x-z,\mu(\epsilon),\epsilon)\left[K(x)-K(x-\mathcal{Z}(\epsilon))\right]> \label{eq: Uderivpulseformal} \\ 
W_{pulse}(z) & = \f{\alpha}{1+\gamma}\integ{x}{z-\mathcal{Z}(\epsilon)}[z]<K(x)> -\epsilon \alpha \integ{x}{-\infty}[z]<D(x-z,\mu(\epsilon),\epsilon)\left[K(x)-K(x-\mathcal{Z}(\epsilon))\right]>, \label{eq: Wpulseformal}
\end{align}
where 
\begin{align*}
C(x,\mu(\epsilon),\epsilon) & = \f{\omega_1(\epsilon)-\omega_2(\epsilon)}\left[\f{1-\omega_2(\epsilon)}{\omega_1(\epsilon)}\e{\f{\omega_1(\epsilon)}{\mu(\epsilon)}x}-\f{1-\omega_1(\epsilon)}{\omega_2(\epsilon)}\e{\f{\omega_2(\epsilon)}{\mu(\epsilon)}x} \right], \\
D(x,\mu(\epsilon),\epsilon) & = \f{\omega_1(\epsilon)-\omega_2(\epsilon)}\left[-\f{\omega_1(\epsilon)}\e{\f{\omega_1(\epsilon)}{\mu(\epsilon)}x}+\f{\omega_2(\epsilon)}\e{\f{\omega_2(\epsilon)}{\mu(\epsilon)}x} \right], \\
\omega_1(\epsilon) & = \f{1+\gamma \epsilon + \sqrt{(1-\gamma \epsilon)^2-4\epsilon}}{2}, \\
\omega_2(\epsilon) & = \f{1+\gamma \epsilon - \sqrt{(1-\gamma \epsilon)^2-4\epsilon}}{2}.
\end{align*}
The parameters $\omega_1(\epsilon)$ and $\omega_2(\epsilon)$ are the eigenvalues associated with the coefficient matrix ${A(\epsilon) =
\begin{pmatrix}
1 & 1 \\
-\epsilon & \gamma \epsilon
\end{pmatrix}}$. We assume that $\epsilon$ and $\gamma$ are sufficiently small so that both eigenvalues are positive.

Similar to the front, we must use the compatibility equations $U_{pulse}(0) = U_{pulse}(\mathcal{Z}(\epsilon)) = \theta$ to solve for $\mu(\epsilon)$ and $\mathcal{Z}(\epsilon)$, provided they exist. Then we must verify that the formal solutions lead to real traveling pulse solutions that satisfy the threshold requirements. As mentioned above, there are typically two solution pairs, $(\mu_{fast}(\epsilon),\mathcal{Z}_{fast}(\epsilon))$ and $(\mu_{slow}(\epsilon),\mathcal{Z}_{slow}(\epsilon))$, leading to stable and unstable pulses respectively \cite{PintoandErmentrout-SpatiallyStructuredActivityinSynapticallyCoupledI.TravelingWaves,Pinto2005}. Since it is not entirely clear how to rigorously prove the existence of pulses for all kernels studied in this paper, we simply calculate $(\mu_{fast}(\epsilon),\mathcal{Z}_{fast}(\epsilon))$ numerically; then we compare the solutions to the singular solutions and argue that real fast pulses exist.

In Figures \ref{fig: pulse_ex1}, \ref{fig: pulse_ex2}, and \ref{fig: pulse_ex3}, we see evidence that fast pulses exist for our kernel classes.

\begin{figure}[H]
\centering
\includegraphics[width=150mm]{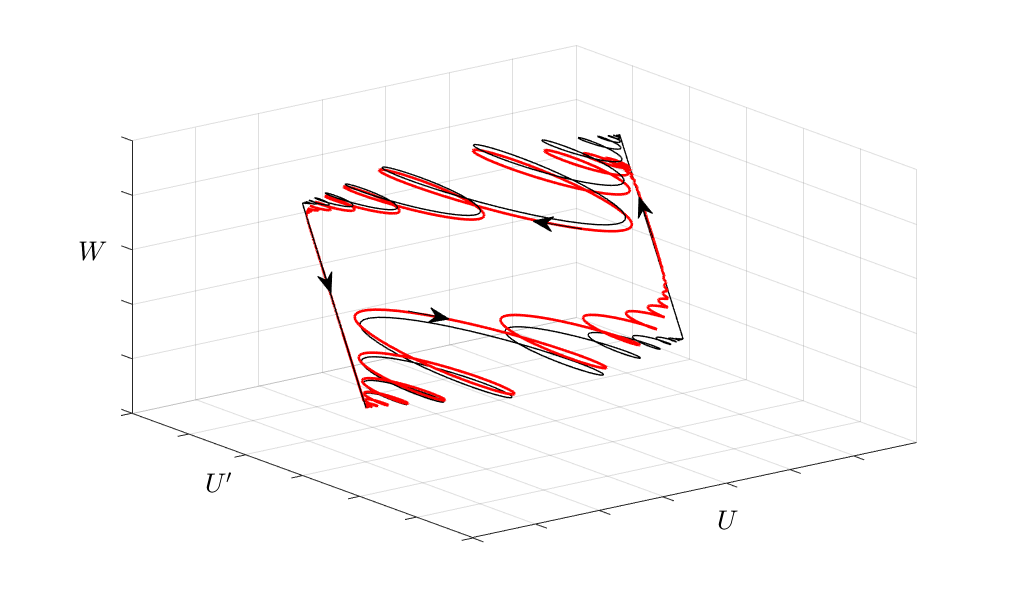}
\caption{Red: Phase space portrait of a fast traveling pulse with kernel $K_1$, $\alpha = 1$, $\theta = 0.4$, $\epsilon=\gamma = 0.001$. Black: Corresponding singular homoclinical orbit when $\epsilon = 0$.}
\label{fig: pulse_ex1}
\end{figure} \rule{0ex}{0ex}

\begin{figure}[H]
\centering
\includegraphics[width=150mm]{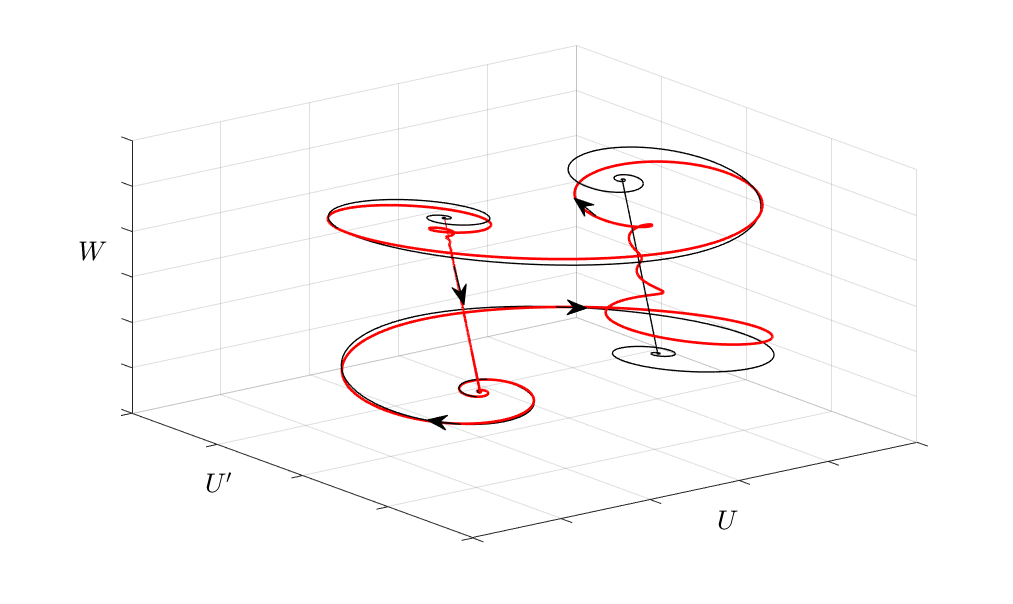}
\caption{Red: Phase space portrait of a fast traveling pulse with kernel $K_2$, $\alpha = 1$, $\theta = 0.4$, $\epsilon=\gamma = 0.001$. Black: Corresponding singular homoclinical orbit when $\epsilon = 0$.}
\label{fig: pulse_ex2}
\end{figure} \rule{0ex}{0ex}

\begin{figure}[H]
\centering
\includegraphics[width=150mm]{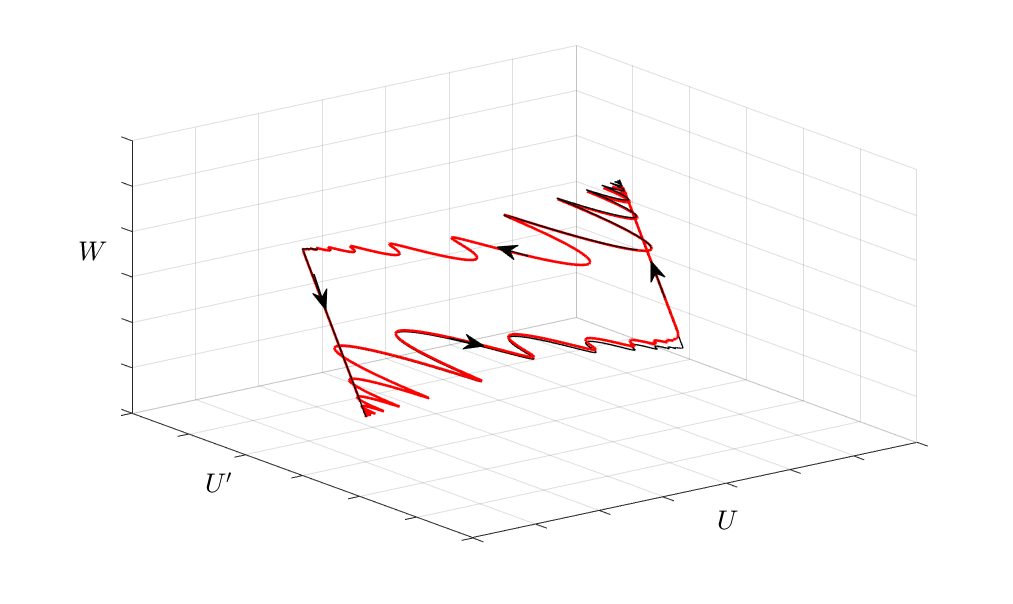}
\caption{Red: Phase space portrait of a fast traveling pulse with kernel $K_3$, $\alpha = 1$, $\theta = 0.4$, $\epsilon=\gamma = 0.001$. Black: Corresponding singular homoclinical orbit when $\epsilon = 0$.}
\label{fig: pulse_ex3}
\end{figure} \rule{0ex}{0ex}

Note that all plots were constructed in Matlab. For the fast pulses, the parameters $\mu(\epsilon)$ and $\mathcal{Z}(\epsilon)$ were computed using the Optimization Toolbox.

\section*{Discussion}

In this paper, we proved the existence and uniqueness of traveling front solutions to \eqref{eq:paper_main} for a wide range of oscillatory kernel classes. To establish uniqueness, we used a new technique to show speed index functions have at most one critical point. In order to make sure that the fronts crossed the threshold exactly once, we established left and right half plane conditions, following the techniques in \cite{LijunZhangetal, LvWang}. In \cref{sec: spectral_stability}, we studied the spectral stability of our fronts using the Evans function approach. Combining our results, we were able to fully classify the existence, uniqueness, and stability of a well-studied one parameter kernel type that crosses the $x$-axis countably many times.

Our results have inspired some interesting open problems: How many of our kernel classes lead to stable versus unstable fronts? Can a meaningful bifurcation criteria be formulated? What is the impact of threshold noise? Can we obtain similar results for kernels with heterogeneities? If so, we would make further mathematical advancements with a meaningful biological connection.

Lastly, in \cref{sec: pulses}, we reviewed the connection between fronts and fast pulses that arise in singularly perturbed integral differential equations. We derived pulses formally and numerically calculated fast pulses for all three example kernels used throughout the paper. Phase space portraits show that indeed, fast pulses are small perturbations of singular homoclinical orbits when $0<\epsilon\ll 1$. We desire a rigorous technique for proving the existence and uniqueness of fast pulses when $K$ crosses the $x$-axis at most countably many times. How often do fast pulses exist for kernels in this paper? Can we obtain fronts for smoothed Heaviside firing rate functions? If so, we may be able to apply typical methods from perturbation theory.

\section*{Acknowledgements}

The author would like to thank his advisor, Linghai Zhang, for offering feedback for the manuscript and the Lehigh University College of Arts and Sciences for a generous summer research fellowship.
\bibliographystyle{siam}
\bibliography{bibliography}
\end{document}